\newtheorem{theorem}{Theorem}
\newtheorem{lemma}{Lemma}
\newtheorem{Def}{Definition}
\newtheorem{Ass}{Assumption}
\newcommand{\vect}[1]{\boldsymbol{#1}}
\begin{document}
%
\title{Finite-time enclosing control for multiple moving targets: a continuous estimator approach}

%

\author{Liang~Zhang 
\thanks{Liang Zhang is with the School of Electrical Engineering and Automation, Anhui university, Hefei 23000, China. 
{\tt\small liangzhang@ahu.edu.cn}}
}

\maketitle
 
\begin{abstract}
This work addresses the finite-time enclosing control problem where a set of followers are deployed to encircle and rotate around multiple moving targets with a predefined spacing pattern in finite time. A novel distributed and continuous estimator is firstly proposed to track the geometric center of targets in finite time using only local information for every follower. Then a pair of decentralized control laws for both the relative distance and included angle, respectively, are designed to achieve the desired spacing pattern in finite time based on the output of the proposed estimator. 
Through both theoretical analysis and simulation validation, we show that the proposed estimator is continuous and therefore can avoid dithering control output while still inheriting the merit of finite-time convergence. The steady errors of the estimator and the enclosing controller are guaranteed to converge to some bounded and adjustable regions around zero. 
\end{abstract}

\begin{IEEEkeywords}
Enclosing control, Multiple moving targets, Finite-time stabilization, Geometrical center, Multi-agent system.
\end{IEEEkeywords}

%
\IEEEpeerreviewmaketitle

\section{Introduction}
%
%
%
%
\IEEEPARstart{E}{ncircling} formation has been found very popular in natural swarms for its potential advantages in predation and protection \cite{beni2004swarm,cushing1968fish,berlinger2021implicit,dorigo2020reflections}, which draws more and more attention to the surrounding or enclosing control problems in recent years. The objectives of these problems usually involve attaining a formation for a group of followers to orbit around a common point with a predefined spacing pattern such that single or multiple targets (or leaders) can always be contained within the formation.

Early researches mainly concentrate on enclosing a single target due to its potential applications of attacking, entrapping, or protecting a target object \cite{yamaguchi1999cooperative,schumacher2005ground}. Results on the single-static-target enclosing problem can be found in \cite{lan2010distributed,zheng2015enclosing,li2018cooperative}. In \cite{lan2010distributed}, a balanced enclosing pattern with uniform circular formation around the target at equal angular distances is studied for the unicycle-type mobile followers. Assuming only bearing measurements are available to the followers, the control methods to enclose a stationary target are investigated in \cite{zheng2015enclosing}. Moving on, the enclosing control has been extended to nonlinear Euler-Lagrange dynamics with collision avoidance in \cite{li2018cooperative}. 

More works attempt to enclose a single moving target. When the followers are assumed to have access to the target's velocity, The cyclic pursuit strategy is developed where each agent simply pursues its predecessor to achieve the target-capturing task in 3D space, for holonomic agents in \cite{kim2007cooperative} and more general MIMO agents in \cite{hara2008distributed}, respectively. Later on, a local information control law is proposed in \cite{GUO2010Local} for the moving-target-enclosing problem where the velocity of the target is bounded but not known a priori. More recent works on the moving-target-enclosing problem are developed regarding the linear dynamics of agents with input saturation in \cite{XU2020Moving}, regarding the nonholonomic agents using feedback linearization control method in \cite{Peng2021Cooperative}, regarding mobile target with time-varying velocity under the consideration of collision avoidance in \cite{Dou2021Moving} and so on.

In the single-target case, the center of circular formation can be directly fixed on the target as its position is generally assumed to be observed by followers. However, enclosing multiple targets is more challenging because the followers must rotate around the targets' geometrical center, and in practice, the followers can hardly make direct observations of this center. Instead, an extra estimator should be maintained by every follower to estimate the targets' geometrical center using only local information, for example, the local measurements to a subset of targets and the exchanged messages from its neighbors. Enclosing multiple static leaders is investigated in \cite{chen2010surrounding,zhang2017surrounding} for both the first- and second-order followers. The designed estimators can exponentially converge to the geometrical center using only their initial measurements to the targets. Then traditional enclosing control approaches for enclosing a single target case can be directly applied to multiple targets case given the output of estimators. 

However, the exponentially convergent estimator can not be applied to enclosing multiple moving targets because the accumulated tracking error between the output of the estimator and the time-varying geometrical center will lead to fatal divergence on the enclosing controller, which results in the development of a finite-time convergent estimator in \cite{shi2015cooperative,sharghi2019finite,zhang2019finite,ma2017finite}. Such an estimator can ensure to exactly track the targets' geometrical center in finite time such that the accumulated error is tolerable by the enclosing controller. However, it currently still suffers from the dithering phenomenon (see Figure 3 in \cite{shi2015cooperative}) when either the estimator is converging or the adjacent estimators are similar due to the presence of discontinuous term. Although a smoothing function is suggested in \cite{shi2015cooperative} to relieve such a problem, no further evidence has been provided to guarantee that the smoothed estimator is still of finite-time convergence and its impact on enclosing control
also waits for further investigation. Besides, current finite-time enclosing controllers in \cite{sharghi2019finite,zhang2019finite,ma2017finite} mostly only consider the balanced spacing pattern and can only guarantee the finite-time convergence on the relative distance channel, while the included angle is still exponentially convergent.  

Regarding the aforementioned problems, this paper addresses the finite-time enclosing control problem for multiple moving targets. A continuous estimator is designed to exactly track the targets' geometrical center without dithering. Rigorous derivations are presented to show that the tracking error can be stabilized into a bounded and controllable stable region in finite time. Hereafter, two finite-time enclosing control laws are developed based on the output of the continuous estimator, which drive both the relative distance and the included angle to the predefined spacing pattern. Therefore, the steady enclosing formation is not necessarily a balanced one as considered in \cite{shi2015cooperative,sharghi2019finite}.

The contributions of the proposed method are mainly threefold, 
\begin{enumerate}
	\item A continuous estimator for the track of the targets' geometrical center, which doesn't cause dithering phenomenon and is still of finite-time convergence.  
	\item A pair of finite-time enclosing controllers, driving the errors of both relative distance and included angle to be stabilized into a small region in finite time. The steady spacing pattern can be arbitrarily specified and therefore the balanced cases in \cite{shi2015cooperative,sharghi2019finite} are included.  
	\item The principles for tuning the parameters in controller and estimator to adjust the tracking and controlling accuracy (i.e. the size of bounded stable regions)
\end{enumerate}

\section{Preliminaries}

Generally, there are two methods to prove the finite-time stability of a nonlinear system. The first one is by applying the following theorem \cite{wang2010finite},

\begin{theorem}
	Consider the nonlinear system $\dot{\vect{x}} = f(\vect{x})$ with $f(\vect{0}) = \vect{0}$, suppose that there exist a $C^1$ function $V(\vect{x}_0)$ defined on a neighborhood of the origin and two real numbers $c>0,\alpha \in (0,1)$, such that $$\dot{V}(\vect{x}) \le -c V^{\alpha}(\vect{x}),$$ then the origin of the system is finite-time stable and the upper bound of the settling time is $$T\le \frac{V^{1-\alpha}(\vect{0})}{c(1-\alpha)}.$$ In addition, given a real constant $\gamma < V(\vect{0})$, then the settling time $T_1$ guaranteeing $V(t)<\gamma, \forall t > T_1$ is $$T_1 \le \frac{V^{1-\alpha}(\vect{0}) - \gamma^{1-\alpha}}{c(1-\alpha)}.$$
\end{theorem}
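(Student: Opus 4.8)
The plan is to collapse the statement to a one–dimensional comparison argument on the scalar $W(t):=V(\vect{x}(t))$ taken along a trajectory. First I would dispose of the trivial initial condition: if $V=0$ at $t=0$ the point is the equilibrium and, since $\dot V\le -cV^{\alpha}\le 0$ while $V\ge 0$ (the standing hypothesis that $V$ is a Lyapunov function, i.e.\ positive definite, is understood), the trajectory stays at the origin. So assume $W(0)=V(\vect{x}_{0})>0$. The one clean idea is the change of variable $W\mapsto W^{1-\alpha}$: on any interval on which the solution exists and $W>0$, the chain rule gives $\frac{d}{dt}W^{1-\alpha}=(1-\alpha)W^{-\alpha}\dot W\le -(1-\alpha)c$, which turns the fractional-power decay into an honest linear one.

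Integrating that from $0$ to $t$ gives $W^{1-\alpha}(t)\le V^{1-\alpha}(\vect{x}_{0})-c(1-\alpha)t$ for as long as $W$ stays positive. The right-hand side vanishes at $T^{\ast}:=\frac{V^{1-\alpha}(\vect{x}_{0})}{c(1-\alpha)}$, and since $W\ge 0$ with $W$ continuous, $W$ cannot survive past $T^{\ast}$; hence $W(t)=0$, i.e.\ $\vect{x}(t)=\vect{0}$, for all $t\ge T^{\ast}$, which is exactly the settling-time bound (reading the paper's $V(\vect{0})$ as the value of $V$ at $t=0$). Lyapunov stability then follows from the usual argument: $\dot V\le 0$ makes $V$ nonincreasing along trajectories, so a small sublevel set of $V$ is forward invariant and, by continuity of $V$, can be placed inside any prescribed neighbourhood of the origin by taking $\vect{x}_{0}$ close enough; together with finite-time convergence this gives finite-time stability. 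For the refined estimate, fix $\gamma<V$ at $t=0$, set $T_{1}:=\frac{V^{1-\alpha}(\vect{0})-\gamma^{1-\alpha}}{c(1-\alpha)}$, and read off from $W^{1-\alpha}(t)\le V^{1-\alpha}(\vect{0})-c(1-\alpha)t$ that $W^{1-\alpha}(t)<\gamma^{1-\alpha}$, hence $V(t)<\gamma$, for every $t>T_{1}$ (the case $W(t)=0$ makes $V(t)<\gamma$ immediate).

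The part I expect to be delicate is the non-Lipschitz behaviour of $y\mapsto y^{\alpha}$ at $y=0$: the comparison equation $\dot y=-cy^{\alpha}$ has non-unique solutions through the origin, so a textbook comparison lemma (which presumes a Lipschitz right-hand side) is not directly applicable. I sidestep this by never forming $W^{-\alpha}$ once $W$ can vanish — I only manipulate $W^{1-\alpha}$ on the open set $\{W>0\}$, integrate there, and then use the sign constraint $W\ge 0$ plus continuity to ``clamp'' $W$ at $0$ afterwards. A minor accompanying point is to justify differentiating $W(t)=V(\vect{x}(t))$: since $V$ is $C^{1}$ and $\vect{x}(\cdot)$ solves the ODE, $W$ is locally Lipschitz in $t$ and $\dot W=\nabla V\cdot f\le -cV^{\alpha}$ holds wherever the solution is defined, which is all the integration step needs. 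Everything past that is bookkeeping with the two closed-form expressions $T^{\ast}$ and $T_{1}$.
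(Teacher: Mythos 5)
Your argument is correct, and it is the standard proof of this result: the paper states this theorem as a cited preliminary without giving a proof, and your comparison argument --- differentiating $W^{1-\alpha}$ to get $\tfrac{d}{dt}W^{1-\alpha}\le -c(1-\alpha)$ on the set $\{W>0\}$, integrating, and then clamping $W$ at zero using $W\ge 0$ together with monotonicity --- is exactly how the cited result is established in the literature. Your two editorial corrections (reading $V(\vect{0})$ as $V(\vect{x}(0))$, and noting that positive definiteness of $V$ must be an implicit hypothesis for $W=0$ to imply $\vect{x}=\vect{0}$) are both the right repairs to the statement as printed.
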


Another method utilizes the merit of homogeneous system, which can be defined by the following definitions,

\begin{Def}
	A function $V:\mathbb{R}^n \rightarrow \mathbb{R} $ is homogeneous of degree $l$ with respect to the "standard dilation":
	\begin{equation}
		\Delta_{\lambda} (x_1,x_2,\cdots,x_n) = (\lambda x_1, \lambda x_2, \cdots, \lambda x_n)
	\end{equation}
	if and only if:
	$$V(\lambda x_1, \lambda x_2, \cdots, \lambda x_n) = \lambda^l V(x_1, x_2, \cdots, x_n)$$
\end{Def}

\begin{Def}
Considering the following system $\dot{\vect{x}} = f(\vect{x}),\vect{x} \in \mathbb{R}^n$, it is called to be homogeneous of degree $q$ with respect to the standard dilation if and only if the $i$-th component $f_i$ is homogeneous of degree $q+1$ with respect to the standard dilation, i.e. $f_i(\lambda x_1, \lambda x_2, \cdots ,\lambda x_n) = \lambda^{q+1}f_i(x_1,\cdots,x_n)$,  $\lambda >0, i =\{1,2,\cdots, n\}$.	
\end{Def}

Then the following theorem \cite{2005Geometric} can guarantee the finite-time stability of the homogeneous system, 

\begin{theorem} \label{Th. HomogeneousFiniteTimeStable}
	Let $f(\vect{x}), \vect{x} \in \mathbb{R}^n$ be a homogeneous vector field of degree $q$ with respect to the standard dilation, then the nonlinear system $\dot{\vect{x}} = f(\vect{x}),\vect{x} \in \mathbb{R}^n$ is finite-time stable if and only if it is asymptotically stable and $q<0$.
\end{theorem}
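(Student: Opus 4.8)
\emph{Sufficiency.} Assume the origin is asymptotically stable and $q<0$; the plan is to build a Lyapunov function to which Theorem~1 applies. First I would invoke the converse Lyapunov theorem for homogeneous systems (Rosier's theorem): under the standing regularity on the homogeneous field $f$, asymptotic stability guarantees, for every sufficiently large degree $m$, a $C^1$ positive definite function $V:\mathbb{R}^n\to\mathbb{R}$ that is homogeneous of degree $m$ and whose derivative $\dot{V}=\nabla V\cdot f$ along the trajectories is negative definite. Differentiating the identity $V(\lambda\vect{x})=\lambda^{m}V(\vect{x})$ shows that $\nabla V$ is homogeneous of degree $m-1$; since each $f_i$ is homogeneous of degree $q+1$, the composition $\dot{V}$ is homogeneous of degree $m+q$. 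Now fix $m>-q$, so that $m+q>0$ and $\alpha:=(m+q)/m\in(0,1)$. Because $V$ is homogeneous and positive definite it is radially unbounded, so the level set $S=\{\vect{x}:V(\vect{x})=1\}$ is compact and avoids the origin; hence the continuous function $\dot{V}$ attains on $S$ a maximum $-c<0$. For arbitrary $\vect{x}\neq\vect{0}$, set $\lambda=V(\vect{x})^{1/m}$, so that $\lambda^{-1}\vect{x}\in S$; homogeneity of $\dot V$ then gives $\dot{V}(\vect{x})=\lambda^{m+q}\dot{V}(\lambda^{-1}\vect{x})\le -c\,\lambda^{m+q}=-c\,V(\vect{x})^{\alpha}$. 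Applying Theorem~1 with this $V$, $c$, and $\alpha\in(0,1)$ yields finite-time stability, together with an explicit settling-time bound.

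\emph{Necessity.} Suppose the origin is finite-time stable. Asymptotic stability is then immediate, being part of the definition (Lyapunov stability together with finite-time convergence implies attractivity). It remains to rule out $q\ge0$. The key is the scaling law for solutions of a homogeneous vector field: writing $\phi(t,\vect{x}_0)$ for the solution, one checks by direct substitution that $\phi(t,\Delta_\lambda\vect{x}_0)=\Delta_\lambda\phi(\lambda^{q}t,\vect{x}_0)$ for all $\lambda>0$. Consequently the settling-time function satisfies $T(\Delta_\lambda\vect{x}_0)=\lambda^{-q}T(\vect{x}_0)$. Pick any $\vect{x}_0\neq\vect{0}$; then $T(\vect{x}_0)>0$, since a nonzero state cannot already be at the origin at time $0$. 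Letting $\lambda\to0^{+}$, the states $\Delta_\lambda\vect{x}_0$ converge to the origin while $T(\Delta_\lambda\vect{x}_0)=\lambda^{-q}T(\vect{x}_0)$ stays equal to $T(\vect{x}_0)>0$ when $q=0$ and diverges when $q>0$. Either case contradicts the continuity of the settling-time function at the origin, where $T(\vect{0})=0$ --- a standard property of finite-time stable systems. Hence $q<0$.

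I expect the sufficiency direction to be the main obstacle, since it hinges entirely on the existence of a \emph{homogeneous} strict Lyapunov function of arbitrarily large degree, a delicate construction that I would cite rather than reprove. A secondary technical point is that for $q<0$ the field $f$ fails to be Lipschitz at the origin, so uniqueness of solutions there cannot be assumed (indeed it must fail for genuine finite-time convergence); away from the origin $f$ remains locally Lipschitz, which is all that is needed for the flow and the scaling identity above to be well defined.
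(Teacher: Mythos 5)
The paper does not actually prove this statement: Theorem~\ref{Th. HomogeneousFiniteTimeStable} is quoted as a known result (Bhat--Bernstein's geometric-homogeneity theorem) with a citation and no proof, so there is nothing in the paper to compare your argument against line by line. Judged on its own, your sufficiency direction is the standard and correct argument: Rosier's converse theorem gives a homogeneous $C^1$ Lyapunov function of degree $m$, $\dot V$ is homogeneous of degree $m+q$, and choosing $m>-q$ makes $\alpha=(m+q)/m\in(0,1)$ so that the compactness-of-the-level-set argument yields $\dot V\le -cV^{\alpha}$ and Theorem~1 applies. This is fine, and you are right that the converse Lyapunov construction is the part one cites rather than reproves.

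The necessity direction, however, has a genuine gap. You rule out $q\ge 0$ by invoking ``continuity of the settling-time function at the origin \ldots a standard property of finite-time stable systems.'' That property is \emph{not} a consequence of finite-time stability: with the usual definition (Lyapunov stability plus finite-time convergence on a neighborhood), the settling-time function $T$ need not be continuous, and Bhat and Bernstein exhibit finite-time-stable systems for which $T$ is unbounded on every neighborhood of the origin, hence discontinuous at $0$. For homogeneous systems, continuity of $T$ is usually obtained as a \emph{corollary} of the homogeneous Lyapunov function built in the sufficiency step, which already presupposes $q<0$ --- so using it to prove $q<0$ is circular. Your scaling identity $T(\lambda\vect{x}_0)=\lambda^{-q}T(\vect{x}_0)$ is correct, but without continuity at the origin neither the $q=0$ case (constant $T$) nor the $q>0$ case ($T$ unbounded near $0$) contradicts the definition, since each individual $T(\lambda\vect{x}_0)$ remains finite. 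The standard, elementary fix: if $q\ge 0$, homogeneity with respect to the standard dilation gives $\|f(\vect{x})\|=\|\vect{x}\|^{q+1}\,\|f(\vect{x}/\|\vect{x}\|)\|\le M\|\vect{x}\|^{q+1}\le M\|\vect{x}\|$ for $\|\vect{x}\|\le 1$, where $M=\max_{\|\vect{y}\|=1}\|f(\vect{y})\|$; Gronwall then yields $\|\vect{x}(t)\|\ge\|\vect{x}(0)\|e^{-Mt}>0$ for all finite $t$, so no nonzero trajectory reaches the origin in finite time --- contradicting finite-time stability directly. Replacing your continuity argument with this estimate closes the proof.
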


In addition, the following two lemmas are instrumental to simplify the derivations of our main results,

\begin{lemma} \label{lemma. SummationFraction}
	Given a vector $\vect{x} \in \mathbb{R}^{n}$ and a positive $0<p<1$, then we have $\left( \sum_{i=1}^{n} |x_i| \right)^p \le \sum_{i=1}^{n} |x_i|^p \le n^{1-p} \left( \sum_{i=1}^n |x_i| \right)^p $
\end{lemma}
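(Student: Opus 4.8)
The plan is to establish the two inequalities separately, since the lower and upper bounds rely on complementary elementary properties of the map $t \mapsto t^{p}$ on $[0,\infty)$ for $0<p<1$: \emph{subadditivity} for the left inequality and \emph{concavity} for the right one. Throughout I may assume $\sum_{i=1}^{n}|x_i| > 0$, because if every $x_i$ vanishes all three expressions equal $0$ and the claim is trivial.

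For the left inequality $\left(\sum_{i=1}^n |x_i|\right)^p \le \sum_{i=1}^n |x_i|^p$, the cleanest route is a normalization argument. Set $s = \sum_{i=1}^n |x_i| > 0$ and $t_i = |x_i|/s \in [0,1]$. Since $0<p<1$ and $t_i \le 1$, we have $t_i^{p} \ge t_i$ for each $i$ (indeed $t_i^{p-1} \ge 1$ on $(0,1]$, and the case $t_i=0$ is immediate). Summing over $i$ gives $\sum_i t_i^{p} \ge \sum_i t_i = 1$, and multiplying through by $s^{p}$ yields $\sum_i |x_i|^{p} \ge s^{p}$, which is exactly the claim. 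An equivalent argument is induction on $n$ via the elementary inequality $(a+b)^{p} \le a^{p}+b^{p}$ for $a,b \ge 0$, which itself follows from the same observation $t^{p}\ge t$ on $[0,1]$ after normalizing by $a+b$.

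For the right inequality $\sum_{i=1}^n |x_i|^{p} \le n^{1-p}\left(\sum_{i=1}^n |x_i|\right)^{p}$, I would invoke concavity of $t \mapsto t^{p}$ on $[0,\infty)$ together with Jensen's inequality applied to the uniform weights $1/n$:
\[
\frac{1}{n}\sum_{i=1}^{n}|x_i|^{p} \;\le\; \left(\frac{1}{n}\sum_{i=1}^{n}|x_i|\right)^{p}.
\]
Multiplying both sides by $n$ and absorbing the factor $n^{-p}$ into the right-hand side gives $\sum_i |x_i|^{p} \le n^{1-p}\left(\sum_i |x_i|\right)^{p}$. Alternatively, one can apply H\"older's inequality to the vectors $(|x_i|^{p})_i$ and $(1)_i$ with conjugate exponents $1/p$ and $1/(1-p)$, which produces the factor $n^{1-p}$ directly.

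I do not anticipate a genuine obstacle here, as the result is classical. The only points requiring a little care are the degenerate case $\sum_i |x_i| = 0$ (dispatched above) and keeping the direction of the scalar inequality $t^{p}$ versus $t$ correct: it reverses exactly at $t=1$, which is why normalizing so that every $t_i \in [0,1]$ is the natural device for the lower bound, while the averaged (Jensen) form is the natural device for the upper bound.
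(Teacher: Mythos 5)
Your proof is correct and complete: the normalization argument with $t^{p}\ge t$ on $[0,1]$ gives the left inequality, and Jensen's inequality (or H\"older) for the concave map $t\mapsto t^{p}$ gives the right one, with the degenerate case handled explicitly. The paper itself states this lemma without proof, citing it as a standard auxiliary fact, so there is no in-paper argument to compare against; your write-up supplies exactly the classical derivation one would expect and could be inserted as is.
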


\begin{lemma} \label{lemma. sum_sum}
	Given two sets of functions $\{ b_1(t),b_2(t),\cdots, b_n(t) \}$, $\{ c_1(t),c_2(t),\cdots, c_n(t) \}$ and a symmetric matrix $\vect{A} =[a_{ij}] \in \mathbb{R}^{n \times n}$, then
	$$\begin{aligned}
		&\sum_{i=1}^n \sum_{j=1}^n \left[ a_{ij} c_i(t) \left[ b_i(t) - b_j(t) \right]^{\frac{q}{p}} \right] \\
		& = \frac{1}{2} \sum_{i=1}^n \sum_{j=1}^n \left \lbrace a_{ij} \left[ c_i(t) - c_j(t) \right]\left[ b_i(t) - b_j(t) \right]^{\frac{q}{p}}    \right \rbrace 
	\end{aligned}$$
	where $p,q>0$ are two odd positives. In addition, if $c_i(t) = b_i(t), \forall i \in \{ 1,2,\cdots,n \}$, then
		$$\begin{aligned}
		&\sum_{i=1}^n \sum_{j=1}^n \left[ a_{ij} c_i(t) \left[ b_i(t) - b_j(t) \right]^{\frac{q}{p}} \right] \\
		& = \frac{1}{2} \sum_{i=1}^n \sum_{j=1}^n \left \lbrace a_{ij} \left[ b_i(t) - b_j(t) \right]^{1+\frac{q}{p}}    \right \rbrace 
	\end{aligned}$$
	And especially when $c_i(t) = 1, \forall i \in \{ 1,2,\cdots,n \}$, 
	$$\begin{aligned}
		\sum_{i=1}^n \sum_{j=1}^n \left[ a_{ij} \left[ b_i(t) - b_j(t) \right]^{\frac{q}{p}} \right]  = 0
	\end{aligned}$$
\end{lemma}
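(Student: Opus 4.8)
The plan is to establish the first identity by the standard symmetrization argument on a double sum, and then read off the two special cases as immediate corollaries. Throughout I adopt the convention $[x]^{q/p} := \mathrm{sign}(x)\,|x|^{q/p}$ (equivalently, the real $p$-th root composed with the $q$-th power), so that every step is unambiguous. Write $S := \sum_{i=1}^{n}\sum_{j=1}^{n} a_{ij}\, c_i(t)\, [b_i(t)-b_j(t)]^{q/p}$. Since $i$ and $j$ are both bound summation indices, relabeling $i \leftrightarrow j$ leaves $S$ unchanged, giving $S = \sum_{i=1}^{n}\sum_{j=1}^{n} a_{ji}\, c_j(t)\, [b_j(t)-b_i(t)]^{q/p}$.

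Next I would invoke the two structural facts supplied by the hypotheses: (i) $\vect{A}$ is symmetric, so $a_{ji} = a_{ij}$; and (ii) because $p$ and $q$ are odd positive integers, the map $x \mapsto [x]^{q/p}$ is odd, hence $[b_j(t)-b_i(t)]^{q/p} = -[b_i(t)-b_j(t)]^{q/p}$. Substituting both into the relabeled expression yields $S = -\sum_{i}\sum_{j} a_{ij}\, c_j(t)\, [b_i(t)-b_j(t)]^{q/p}$. Adding this to the original definition of $S$ and dividing by $2$ gives $S = \frac{1}{2}\sum_{i}\sum_{j} a_{ij}\bigl(c_i(t)-c_j(t)\bigr)[b_i(t)-b_j(t)]^{q/p}$, which is exactly the claimed identity.

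For the specialization $c_i(t) = b_i(t)$ I would substitute directly into the factor $\bigl(c_i(t)-c_j(t)\bigr)[b_i(t)-b_j(t)]^{q/p} = \bigl(b_i(t)-b_j(t)\bigr)[b_i(t)-b_j(t)]^{q/p}$ and note that multiplying $[x]^{q/p}$ by $x$ raises the exponent by one (with the sign convention, $x\cdot \mathrm{sign}(x)|x|^{q/p} = |x|^{1+q/p}$), producing $[b_i(t)-b_j(t)]^{1+q/p}$ termwise. For $c_i(t) \equiv 1$, every difference $c_i(t)-c_j(t)$ vanishes, so the right-hand side of the identity — and therefore the left-hand side — is identically zero.

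There is no deep obstacle in this argument; the single point that genuinely requires care is the oddness of the fractional-power operator, which is precisely why the statement insists that $p$ and $q$ be odd. For a general positive rational exponent, $[x]^{q/p}$ need not be well defined for $x<0$ or need not be odd, and the crucial sign flip $[b_j(t)-b_i(t)]^{q/p} = -[b_i(t)-b_j(t)]^{q/p}$ would break down. I would therefore state the sign convention and record the oddness property explicitly before beginning the index swap, so that the symmetrization step is fully justified.
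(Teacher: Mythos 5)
Your proof is correct: the index swap $i\leftrightarrow j$, combined with the symmetry $a_{ji}=a_{ij}$ and the oddness of $x\mapsto x^{q/p}$ for odd $p,q$, is exactly the standard symmetrization argument this identity rests on, and the two special cases follow as you describe. The paper states this lemma without proof, so there is nothing to compare against; your explicit handling of the sign convention for the fractional power (and the observation that $1+q/p=(p+q)/p$ has an even numerator, making the resulting term nonnegative) is a worthwhile addition rather than a deviation.
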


\section{Problem formulation}

Consider a group of moving targets $i, i \in  \mathcal{V}_L = \{ 1,2,\cdots, m \} $, called the leaders, that can move freely in a 2-D plane. Their dynamics can be formulated  by, 
\begin{equation} \label{eq. leadersDynamics}
	\vect{\dot{p}}_i^t = \vect{v}_i, \forall i \in \mathcal{V}_L
\end{equation}
In the same plane exists another group of moving agents, $i, i \in \mathcal{V}_F = \{ 1,2,\cdots, n \}$, called the followers, that are deployed to enclose the leaders with a predefined spacing pattern. The followers are also governed by
\begin{equation} \label{eq. followersDynamics}
	\vect{\dot{p}}_i=\vect{u}_i,\forall i \in \mathcal{V}_F
\end{equation}
where $\vect{p}_i^t = [p_{i,x}^t, p_{i,y}^t]^T, \vect{{p}}_i = [p_{i,x}, p_{i,y}]^T$ are the positions of leaders and followers, respectively. $\vect{v}_i \in \mathbb{R}^2$ is the velocity of leader $i$ and $\vect{u}_i \in \mathbb{R}^2$ is the control input of follower $i$. We firstly have the following assumptions regarding the communication and observation topology,
\begin{Ass} \label{ass. undirectNetwork}
	The communication topology among followers is undirect and connected. The communication weights are either $0$ or $1$.
\end{Ass}
\begin{Ass} \label{ass. leastObservation}
	Each leader is assumed to be observed by at least one follower during the enclosing. The follower can only observe its neighboring leaders' positions while the velocities are unknown.
\end{Ass}
In addition, we further assume that, 
\begin{Ass} \label{ass. boundedVelocityLeader}
	There exists a known positive constant $\beta$, such that $$||\vect{v}_i||_2 \le \beta, \forall i \in \mathcal{V}_L.$$
\end{Ass} 


\begin{figure}
	\centering
	\includegraphics[width=0.8\linewidth]{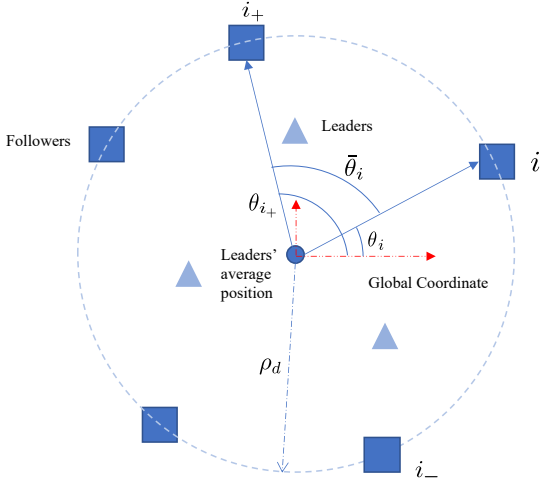}
	\caption{Configuration for the enclosing problem}
	\label{fig. enclosingDemonstration}
	\vspace{-10pt}
\end{figure}

The enclosing problem is demonstrated in Fig.~\ref{fig. enclosingDemonstration}, where the neighbor strategy taken in this work is the same as that in \cite{GUO2010Local,Dou2021Moving} and thus is omitted. Specifically, the targets' geometrical center is equally represented as the leader's average position (LAP) in the following sections. Formally, it can be computed by:
\begin{equation} \label{eq. LAPdefinition}
	\vect{\bar{p}}^t = \frac{1}{m} \sum_{i \in \mathcal{V}_F} \vect{p}_i^t
\end{equation}
Given the LAP, we can define the relative position between follower $i$ and the LAP as:
$$\Delta \vect{p}_i = \vect{p}_i -\vect{\bar{p}^t} = [\Delta p_i^x,\Delta p_i^y]^T$$
Then the relative angle between the follower $i$ and the LAP in the global frame is be denoted by 
$$\theta_i = atan2(\frac{\Delta p_i^y}{\Delta p_i^x}).$$
As a result, the included angle from follower $i$ to its successor $i_{+}$ is defined by:
\begin{equation}
	\bar{\theta}_i = \theta_{i_{+}}-\theta_{i} + \varsigma_i
\end{equation}
where
\begin{equation}
	\varsigma_i=\left\lbrace \begin{aligned}
		0,\text{if } \theta_{i_{+}}-\theta_i \ge 0\\
		2\pi, \text{if }\theta_{i_{+}}-\theta_i < 0
	\end{aligned}  \right.
\end{equation}

A predefined spacing pattern is encoded by $\mathbb{P} = \{ w_d, \rho_d, \vect{a} \}$, which contains a set of desired included angles $\vect{a} = [a_1,a_2,\cdots,a_n]^T$, a desired circling radius $\rho_d$ and a desired angular velocity $w_d$ of followers when they finally rotate around the LAP.  The pattern is said to be admissible if and only if $\sum_{i=1}^n a_i = 2\pi, w_d > 0,$ and $\rho_d>max_{i \in \mathcal{V}_L}(||\vect{p}_i^t-\vect{\bar{p}}^t||_2)$.

\textit{\textbf{Problem 1}:} Given Assumptions~\ref{ass. undirectNetwork} - \ref{ass. boundedVelocityLeader} and the MAS composed of leaders in \eqref{eq. leadersDynamics} and followers in \eqref{eq. followersDynamics}, design a distributed controller for all followers such that they encircle the leaders (i.e. move along a common circle centered at the LAP) and the desired admissible spacing pattern $\mathbb{P}$ is achieved in finite time. 

%
%

\section{Continuous finite-time estimator}

Acquiring the exact LAP is the key for every follower to distributively accomplish the \textit{Problem 1}. One intuitive method is to collect all leaders' positions in each follower and then make the calculation directly according to its definition in \eqref{eq. LAPdefinition}. However, it is usually difficulty or impossible in reality due to various practical limitations. A more meaningful approach is to maintain an estimator for the LAP using only local information, for example, the observations to a subset of neighboring leaders and the exchanged messages from other neighboring followers. A well-studied estimator is designed as:
\begin{equation}
	\left \lbrace \begin{aligned}
		&\vect{{r}}_i = \vect{\Phi}_i+\vect{\tilde{p}}_i, \qquad \vect{\tilde{p}}_i=\frac{n}{m}\sum_{j\in \vect{N}_i^T}\frac{1}{|\vect{N}_j^F|}\vect{p}_j^t. \\	
		&\vect{\dot{\Phi}}_i=k_{sgn}\sum_{j\in \vect{N}_i}\frac{\vect{r}_j-\vect{r}_i}{|\vect{r}_j-\vect{r}_i|}
	\end{aligned} \right.
\end{equation}

It has been investigated in \cite{shi2015cooperative,sharghi2019finite,zhang2019finite,ma2017finite} that the output of this estimator $\vect{{r}}_i$ can exactly track the real LAP in finite time if Assumptions~\ref{ass. undirectNetwork} - \ref{ass. boundedVelocityLeader} hold and the estimator gain has $k_{sgn} > \beta (n-1)$.

However, such estimator consists of the discontinuous term $\frac{\vect{r}_j-\vect{r}_i}{|\vect{r}_j-\vect{r}_i|}$, which can result in dithering when either the estimator is getting converging or the adjacent estimators are similar. Regarding this problem, we propose a continuous estimator as following,
\begin{equation} \label{eq. smoothingEstimator}
	\left \lbrace
	\begin{aligned}
		&\vect{{r}}_i = \vect{\Phi}_i+\vect{\tilde{p}}_i, \qquad \vect{\tilde{p}}_i=\frac{n}{m}\sum_{j\in \vect{N}_i^T}\frac{1}{|\vect{N}_j^F|}\vect{p}_j. \\	
		&\vect{\dot{\Phi}}_i=k_e\sum_{j\in \vect{N}_i} a_{ij}\left( \vect{r}_j - \vect{r}_i \right)^{\frac{1}{\alpha_1}}
	\end{aligned} \right.
\end{equation}
where $\alpha_1 = \frac{p_1}{q_1}$, and $p_1>q_1>0$ are two positive odds. $k_e > 0$ is the estimator gain. To show its effectiveness, we present the following theorem, 

\begin{theorem} \label{Th. smoothingEstimator}
	Given Assumptions~\ref{ass. undirectNetwork} - \ref{ass. boundedVelocityLeader}, considering the system in \eqref{eq. leadersDynamics} and \eqref{eq. followersDynamics}, then if each follower in the MAS maintains an estimator \eqref{eq. smoothingEstimator}, there exist a settling time $T_1$ and a positive threshold $\epsilon$, such that,
	\begin{equation}
		  || \vect{r}_i - \vect{\bar{p}}^t ||_2 \le \epsilon, \forall t \ge T_1
	\end{equation}
	where the threshold can be adjusted by tuning the parameters in estimator.
\end{theorem}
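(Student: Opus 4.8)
The plan is to reduce the statement to a scalar practical finite-time stabilization problem. Since the $x$- and $y$-channels of \eqref{eq. smoothingEstimator} decouple, I would fix one Cartesian coordinate and, abusing notation, let $r_i,\Phi_i,\tilde p_i,\bar p^t$ denote that coordinate's components, $e_i := r_i-\bar p^t$ the estimation error, and $\vect e := [e_1,\dots,e_n]^\top$. First I would record two structural facts. Summing $\tilde p_i$ over $i\in\mathcal V_F$ and interchanging the order of summation, each leader $j$ is counted exactly $|\vect N_j^F|$ times, so $\tfrac1n\sum_i\tilde p_i=\tfrac1m\sum_{j\in\mathcal V_L}p_j^t=\bar p^t$; and summing $\dot\Phi_i$ over $i$, the third identity of Lemma~\ref{lemma. sum_sum} (with $c_i\equiv1$, the odd ratio $1/\alpha_1=q_1/p_1$, and the symmetric $\{0,1\}$ weights of Assumption~\ref{ass. undirectNetwork}) gives $\tfrac{d}{dt}\sum_i\Phi_i=0$. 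With the standard initialization $\sum_i\Phi_i(0)=0$ these combine to $\sum_i e_i(t)\equiv0$, i.e.\ $\vect e\perp\mathbf 1$ for all $t$. Because $r_j-r_i=e_j-e_i$, the error obeys
\begin{equation*}
\dot e_i = k_e\sum_{j\in\vect N_i}a_{ij}(e_j-e_i)^{1/\alpha_1}+d_i,
\end{equation*}
where the residual $d_i$ is produced solely by the leader velocities and satisfies $|d_i|\le(n+1)\beta=:D$ by Assumption~\ref{ass. boundedVelocityLeader}.

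Next I would take $V=\tfrac12\sum_i e_i^2$ and compute $\dot V=k_e\sum_i\sum_j a_{ij}e_i(e_j-e_i)^{1/\alpha_1}+\sum_i e_i d_i$. Extracting the sign of the odd power $q_1/p_1$ and applying Lemma~\ref{lemma. sum_sum} with $c_i=b_i=e_i$ collapses the first double sum to $-\tfrac{k_e}{2}\sum_i\sum_j a_{ij}|e_i-e_j|^{1+1/\alpha_1}$, which is $\le 0$ since $p_1+q_1$ is even. As $\tfrac12(1+1/\alpha_1)\in(0,1)$, Lemma~\ref{lemma. SummationFraction} lower-bounds the magnitude of this term by $\big(\sum_{i,j}a_{ij}|e_i-e_j|^2\big)^{\tfrac12(1+1/\alpha_1)}$, and connectivity together with $\vect e\perp\mathbf 1$ yields $\sum_{i,j}a_{ij}(e_i-e_j)^2=2\,\vect e^\top\vect L\vect e\ge4\lambda_2(\vect L)V$, where $\vect L$ is the follower Laplacian and $\lambda_2(\vect L)>0$ its Fiedler value. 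Bounding the disturbance term by $|\sum_i e_i d_i|\le\|\vect e\|_2\|\vect d\|_2\le\sqrt{2n}\,D\,V^{1/2}$, I obtain
\begin{equation*}
\dot V \le -c_1 V^{p}+c_2 V^{1/2},\qquad p=\tfrac12+\tfrac{1}{2\alpha_1}\in(\tfrac12,1),
\end{equation*}
with $c_1=\tfrac{k_e}{2}(4\lambda_2)^{p}$ and $c_2=\sqrt{2n}\,D$.

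To conclude I would exploit that the stabilizing exponent $p$ strictly exceeds the perturbation exponent $\tfrac12$: $V$ cannot be forced to zero, but a ball around the origin is rendered attractive in finite time and forward invariant. Choosing $\mu=(2c_2/c_1)^{1/(p-1/2)}=(2c_2/c_1)^{2\alpha_1}$, every $V\ge\mu$ satisfies $c_2 V^{1/2}\le\tfrac12 c_1 V^{p}$, hence $\dot V\le-\tfrac{c_1}{2}V^{p}$ with $p\in(0,1)$; the settling-to-a-level part of Theorem~1 then bounds the time to reach $\{V\le\mu\}$ by $T_1\le\dfrac{V(0)^{1-p}-\mu^{1-p}}{(c_1/2)(1-p)}$, and since $\dot V<0$ on $\{V>\mu\}$ the set $\{V\le\mu\}$ is forward invariant, so $|e_i(t)|\le\sqrt{2\mu}$ for all $t\ge T_1$. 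Running the identical argument on the second coordinate and combining, $\|\vect r_i-\vect{\bar{p}}^t\|_2\le 2\sqrt{\mu}=:\epsilon$ for $t\ge T_1$. Finally, $\mu$ is monotonically decreasing in the gain $k_e$ (and further shaped by $\alpha_1$), so $\epsilon$ can be made arbitrarily small through the estimator parameters, which establishes the ``adjustable'' claim.

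The routine parts are the two invocations of Lemma~\ref{lemma. sum_sum} and the Fiedler bound. The main obstacle is the final step: because here the ``good'' term carries the larger exponent, the classical finite-time Lyapunov argument toward the origin is unavailable, so one must restrict the differential inequality to $\{V\ge\mu\}$, verify forward invariance of $\{V\le\mu\}$, and invoke the settling-to-a-level statement of Theorem~1 rather than its settling-to-zero statement. Some care is also needed in the edge-versus-ordered-pair bookkeeping when passing from $\sum_{i,j}a_{ij}(\cdot)$ to $\vect e^\top\vect L\vect e$, and in verifying the sign and nonnegativity facts for powers with even-over-odd exponents.
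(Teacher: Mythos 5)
Your proof is correct and follows essentially the same route as the paper: both rest on the pairwise-disagreement Lyapunov function (your $V=\tfrac12\sum_i e_i^2$ coincides with the paper's $V_1=\tfrac12\sum_{i,j}\|\vect r_i-\vect r_j\|_2^2$ up to the factor $2n$, precisely because of the conserved mean $\tfrac1n\sum_i\vect r_i=\vect{\bar p}^t$ that you also establish), both collapse the double sum via Lemma~\ref{lemma. sum_sum}, arrive at a differential inequality of the form $\dot V\le -c_1V^{p}+c_2V^{1/2}$ with $p\in(\tfrac12,1)$, and conclude practical finite-time convergence to a sublevel set whose radius shrinks with $k_e$ and $\alpha_1$. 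If anything, your explicit use of the Fiedler value $\lambda_2(\vect L)$ together with $\vect e\perp\mathbf 1$ to pass from the edge sum to $V$ makes rigorous a connectivity step that the paper's chain of inequalities leaves implicit.
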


\begin{proof}
Considering a Lyapunouv candidate $V_1  =  \frac{1}{2} \sum_{i,j=1}^n  || \vect{r}_i - \vect{r}_j ||_2^2 =  \frac{1}{2} \sum_{i,j=1}^n  \left( \vect{r}_i - \vect{r}_j \right)^T  \left( \vect{r}_i - \vect{r}_j \right)$, its derivative has 
\begin{equation}
	\begin{aligned}
			&\dot{V_1} = \frac{1}{2} \sum_{i,j=1}^n   \left[  (\vect{\dot{r}}_i - \vect{\dot{r}}_j)^T \left( \vect{r}_i - \vect{r}_j \right) + \left( \vect{r}_i - \vect{r}_j \right)^T (\vect{\dot{r}}_i - \vect{\dot{r}}_j) \right] \\
			& =   \sum_{i,j=1}^n  \left[  \left( \vect{r}_i - \vect{r}_j \right)^T (\vect{\dot{\Phi}}_i - \vect{\dot{\Phi}}_j) + \left( \vect{r}_i - \vect{r}_j \right)^T (\vect{\dot{\tilde{p}}}_i - \vect{\dot{\tilde{p}}}_j)  \right]  \\
\end{aligned}
\end{equation}
Substituting the estimator in \eqref{eq. smoothingEstimator} yields
\begin{equation}
	\begin{aligned}
	        &\dot{V} = \sum_{i,j=1}^n  \left[  -k_e\left( \vect{r}_i - \vect{r}_j \right)^T \sum_{s_1 =1}^n a_{is_1}\left( \vect{r}_i - \vect{r}_{s_1} \right)^{\frac{1}{\alpha_1}} \right.  \\
			& \left. -k_e\left( \vect{r}_i - \vect{r}_j \right)^T \sum_{s_2 =1}^n a_{js_2}\left( \vect{r}_{s_2} - \vect{r}_j \right)^{\frac{1}{\alpha_1}}  + \left( \vect{r}_i - \vect{r}_j \right)^T (\vect{\dot{\tilde{p}}}_i - \vect{\dot{\tilde{p}}}_j) \right]\\
			& = -\frac{k_e}{2} \sum_{i,j=1}^n    \left( \sum_{s_1 = 1}^n a_{is_1} (\vect{r}_i - \vect{r}_{s_1})^{\frac{1+\alpha_1}{\alpha_1}} +  \sum_{s_2 = 1}^n a_{js_2} (\vect{r}_j - \vect{r}_{s_2})^{\frac{1+\alpha_1}{\alpha_1}} \right) + \\
			& k_e \sum_{i=1}^n  \vect{r}_i \sum_{j,s_2=1}^n a_{js_2} (\vect{r}_j - \vect{r}_{s_2})^{\frac{1}{\alpha_1}} + k_e \sum_{j=1}^n \vect{r}_j \sum_{i,s_1=1}^n  a_{is_1} (\vect{r}_i - \vect{r}_{s_1})^{\frac{1}{\alpha_1}} \\
			& + \sum_{i,j=1} \left[ \left( \vect{r}_i - \vect{r}_j \right)^T (\vect{\dot{\tilde{p}}}_i - \vect{\dot{\tilde{p}}}_j) \right]
	\end{aligned}
\end{equation}
Recalling Lemma~\ref{lemma. sum_sum}, it's clear that
$$\sum_{j=1}^n  \sum_{s_2=1}^n a_{js_2} (\vect{r}_j - \vect{r}_{s_2})^{\frac{1}{\alpha_1}} = 0$$
and
$$\sum_{i=1}^n  \sum_{s_1=1}^n a_{is_1} (\vect{r}_i - \vect{r}_{s_1})^{\frac{1}{\alpha_1}} = 0$$
Therefore, the derivative can be rewritten into:
\begin{equation}
	\begin{aligned}
		\dot{V_1} &= -\frac{n k_e}{2} \left[ \sum_{i,s_1=1}^n  a_{is_1} \left( \vect{r}_i - \vect{r}_{s_1} \right)^{\frac{1+\alpha_1}{\alpha_1}} + \sum_{js_2=1}^n a_{js_2} \left( \vect{r}_j - \vect{r}_{s_2} \right)^{\frac{1+\alpha_1}{\alpha_1}}  \right] \\
		&+ \sum_{i,j=1} \left[ \left( \vect{r}_i - \vect{r}_j \right)^T (\vect{\dot{\tilde{p}}}_i - \vect{\dot{\tilde{p}}}_j) \right] \\
		& = - n k_e  \sum_{i,s_1=1}^n  a_{is_1} \left( \vect{r}_i - \vect{r}_{s_1} \right)^{1+\frac{1}{\alpha_1}} + \sum_{i,j=1} \left[ \left( \vect{r}_i - \vect{r}_j \right)^T (\vect{\dot{\tilde{p}}}_i - \vect{\dot{\tilde{p}}}_j) \right] \\
		& \le - n k_e  \sum_{i,s_1=1}^n  a_{is_1} \left( \vect{r}_i - \vect{r}_{s_1} \right)^{1+\frac{1}{\alpha_1}} + 2\sum_{i,j=1} \left[  || \vect{\dot{\tilde{p}}}_i ||_2^{\frac{1}{2}} || \vect{r}_i - \vect{r}_j ||_2  \right]
	\end{aligned}
\end{equation}
From the definition of $\tilde{\vect{p}}_i$ and Assumption~\ref{ass. leastObservation}, we know that $|\vect{N}_j^F| \ge 1$ and $|\vect{N}_i^T | \le m$, which yields
$$ \begin{aligned}
	|| \vect{\dot{\tilde{p}}}_i ||_2 &\le \frac{n}{m}\sum_{j\in \vect{N}_i^T}\frac{1}{|\vect{N}_j^F|}||\vect{\dot{p}}_j^t||_2 \\
	& \le  n \text{max}_{j \in \mathcal{V}_T} ||\vect{\dot{p}}_j^t||_2 = n \beta
\end{aligned} $$
As a result, 
\begin{equation}
	\begin{aligned}
		\dot{V_1} & \le - n k_e  \sum_{i,s_1=1}^n  a_{is_1} \left( \vect{r}_i - \vect{r}_{s_1} \right)^{1+\frac{1}{\alpha_1}} + 2 n \beta \sum_{i,j=1}   || \vect{r}_i - \vect{r}_j ||_2 \\
		& \le -n k_e V_1^{\frac{\alpha_1 + 1}{2\alpha_1}} + 2n\beta V_1^{\frac{1}{2}} .
	\end{aligned}
\end{equation}
If we let $$-n k_e V_1^{\frac{\alpha_1 + 1}{2\alpha_1}} + 2n\beta V_1^{\frac{1}{2}} \le - \eta V_1^{\frac{1}{2}} $$
i.e. 
$$ \begin{aligned}
	-n k_e V_1^{\frac{1}{2\alpha_1}} &+ 2n\beta \le - \eta \\
	V_1 &\ge \left( \frac{\eta +2n\beta}{nk_e} \right)^{2\alpha_1} := f(\beta,k_e,\alpha_1)
\end{aligned} $$
where $\eta > 0$ is an arbitrary and $f(\beta,k_e,\alpha_1)$ is a single-valued function with respect to the inputs, then $$\dot{V}_1 \le -\eta V_1^{\frac{1}{2}}$$
which means the Lyapunov function will converge to the set $[0,f(\beta,k_e,\alpha_1)]$ in finite time $$T_1 \le 2\frac{V_1^{\frac{1}{2}}(0)-f^{\frac{1}{2}}(\beta,k_e,\alpha_1)}{\eta}.$$ 
and will stay within the set, i.e. $V_1(t) \in [0,f(\beta,k_e,\alpha_1)], \forall t > T_1$. 

Since $V_1 = \sum_{i,j=1} (\vect{r}_i - \vect{r}_j)^2$, for any two followers $i,j$, the difference between their estimates has
$$||\vect{r}_i - \vect{r}_j|| \le \sqrt{V_1} \le f^{\frac{1}{2}}(\beta,k_e,\alpha_1) $$ 
before the settling time $T_1$. In addition, 
$$\begin{aligned}
	\sum_{i=1}^n \vect{r}_i(t) &= k_e\int_0^t  \sum_{i=1}^n \sum_{j\in \vect{N}_i} a_{ij}(\vect{r}_j - \vect{r}_i)^{\frac{1}{\alpha_1}} +  \sum_{i=1}^n \vect{\tilde{p}}_i \\
\end{aligned} $$
Since $$\sum_{i=1}^n \sum_{j\in \vect{N}_i} a_{ij}(\vect{r}_j - \vect{r}_i)^{\frac{1}{\alpha_1}} = 0$$
and $$\sum_{i=1}^n \vect{\tilde{p}}_i = \sum_{i=1}^n \frac{n}{m}\sum_{j\in \vect{N}_i^T}\frac{1}{|\vect{N}_j^F|}\vect{p}_j^t = \frac{n}{m} \sum_{i=1}^m \vect{p}^t_j = n\vect{\bar{p}}^t $$
Therefore, $$\frac{1}{n} \sum_{i=1}^n \vect{r}_i = \vect{\bar{p}}_t.$$ 
For any follower $i \in \mathcal{V}_F$, the error between the real LAP and the output of the estimate in \eqref{eq. smoothingEstimator} is
$$\begin{aligned}
	|| \vect{r}_i - \vect{\bar{p}}^t ||  &= || \vect{r}_i - \frac{1}{n} \sum_{j=1}^{n} \vect{r}_j ||\\
	& = || \frac{1}{n} \sum_{j=1}^n ( \vect{r}_i - \vect{r}_j )||\\
	& \le \frac{1}{n} \sum_{j=1}^n ||( \vect{r}_i - \vect{r}_j )|| \\
	& \le f^{\frac{1}{2}}(\beta,k_e,\alpha_1)
\end{aligned}$$

Let define $\epsilon = f^{\frac{1}{2}}(\beta,k_e,\alpha_1)$, then this ends the proof of Theorem~\ref{Th. smoothingEstimator}.
\end{proof}

Remark that Theorem~\ref{Th. smoothingEstimator} ensures that the proposed continuous estimator can  achieve a bounded accuracy in finite time. The upper bound of accuracy can be adjusted by tuning the parameters $k_e,\alpha_1$ in the estimator. In practice, the preferred principles for determining $k_e$ and  $\alpha_1$ could be:
\begin{enumerate}
	\item choose a large $k_e>0$ such that $nk_e > \eta + 2n\beta$. 
	\item given the desired accuracy $\epsilon_d$ and settling time $T_{1,d}$, $\alpha_1$ should satisfy:
	$$\alpha_1 > \frac{\ln \epsilon_d}{\ln (\eta + 2n\beta) - \ln (nk_e)}$$ 
	and
	$$ \alpha_1 > \frac{\ln\left[ V_1^{\frac{1}{2}}(0) - \frac{T_{1,d}n(k_e - 2\beta)}{2} \right]}{\ln (\eta + 2n\beta) - \ln (nk_e)} $$ 
\end{enumerate}

\section{Controller design}
In this section, a controller is designed for \textit{Problem 1} to form the desired encircling formation. Firstly, we will show the property of finite-time convergence in Subsection~\ref{subsec. finite-time-controller} for the proposed controller without considering the estimated errors of the continuous estimators in \eqref{eq. smoothingEstimator}. Then, the impact of estimated errors is analyzed in Subsection~\ref{subsec. bounded-error}.

\subsection{Finite-time enclosing controller} \label{subsec. finite-time-controller}
After obtaining the estimate of LAP in each follower, we can define the estimated relative distance error:
\begin{equation}
	z_i={\rho}_i^{'}-\rho_d
\end{equation}
where ${\rho}_i^{'} = ||\Delta \vect{p}_i^{'}||$ and $\Delta \vect{p}_i^{'} = \vect{p}_i - \vect{r}_i$ are the relative position vector between the follower and the estimated LAP. Similar to the definition of $\theta_i$, we can further define the estimated included angle using $\Delta \vect{p}_i^{'}$ as 
$$\theta_i^{'} = atan2(\frac{\Delta p_i^{'y}}{\Delta p_i^{'x}}). $$  
Then the error of estimated included angle is defined as:
\begin{equation}
	\delta_i= \hat{\theta} -1, \quad \hat{\theta} = \frac{\bar{\theta}_i^{'}}{a_i}, \quad \bar{\theta}_i^{'} = \theta_{i+}^{'} - \theta_{i}^{'} + \varsigma_i
\end{equation}
It's easy to verify that $\sum_{i=1}^{n} a_i\delta_i = \sum_{i=1}^n \bar{\theta}_i - \sum_{i=1}^n a_i = 2\pi - 2\pi = 0 $, i.e. $\vect{a}^T\vect{\delta} = 0$. The derivative of relative error is :
\begin{equation}
	\begin{aligned}
		\dot{z}_i &= \dot{\rho}_i^{'}\\
		&=\frac{d}{dt} \sqrt{(\Delta p_i^{'x})^2 +(\Delta p_i^{'y})^2} \\
		&=\frac{1}{2\rho_i^{'}}  \dot{\Delta p_i^{'x}} \Delta p_i^{'x} + \dot{\Delta p_i^{'y}} \Delta p_i^{'y} \\
		&=\frac{1}{2\rho_i^{'}}\Delta \vect{\dot{p}}_i^{'T} \Delta \vect{p}_i^{'} =\frac{1}{2\rho_i^{'}}\Delta \vect{{p}}_i^{'T} \Delta \vect{\dot{p}}_i{'} = \frac{1}{2}\vect{\varphi}_i^{'T} \Delta \vect{\dot{p}}_i^{'}
	\end{aligned}
\end{equation}
Regarding the included angle error, one has:
\begin{equation}
	\dot{\delta}_i = \frac{\dot{\bar{\theta}}_i^{'}}{a_i}-1, \quad \dot{\bar{\theta}}_i = \dot{\theta}_{i_{+}}^{'}-\dot{\theta}_{i}^{'}
\end{equation}
Further, we have:
\begin{equation}
	\begin{aligned}
		\dot{\theta}_{i}^{'} &= \frac{1}{1+(\frac{\Delta p_i^{'y}}{\Delta p_i^{'y}})^2} \frac{d}{dt}(\frac{\Delta p_i^{'y}}{\Delta p_i^{'y}})\\
		&=\frac{1}{\rho_i^{'}}[-\Delta p_i^{'y},\Delta p_i^{'x}][-\Delta \dot{p}_i^{'x},\Delta \dot{p}_i^{'y}]^T\frac{1}{\rho_i^{'}}\\
		&=\vect{\varphi}_{i\perp}^{'T}\Delta \vect{\dot{p}}_i^{'} \frac{1}{\rho_i^{'}}		
	\end{aligned}
\end{equation}
Let $\Delta \vect{p}^{'}=[\Delta \vect{p}_1^{'},\Delta \vect{p}_2^{'},\cdots,\Delta \vect{p}_n^{'}]^T, \vect{z}=[z_1,z_2,\cdots,z_n]^T, \vect{\delta} = [\delta_1,\delta_2,\cdots,\delta_n]^T$ be the compact vector of these variables respectively. Then we have 
\begin{equation} \label{eq. originalDynamicsErrors}
	\left \lbrace
	\begin{aligned}
		\dot{\vect{z}} &= \frac{1}{2} diag([\vect{\varphi}_i^{'T}]) \Delta \vect{\dot{p}}^{'}=\frac{1}{2}\vect{\phi} \Delta \vect{\dot{p}}^{'} \\
		\vect{\dot{\delta}} &=- \vect{D}^{-1} \vect{\Lambda}^{-1} \vect{L}_{\theta}\vect{\phi}_{\perp}\Delta\vect{\dot{p}}^{'}
	\end{aligned} \right.
\end{equation}
where $\vect{D}= diag([\rho_i^{'}]) \in \mathbb{R}^{n\times n} ,\vect{L}_{\theta}\in \mathbb{R}^{n\times n} = diag([l_{ij}])$ and $L_{i,i}=1,L_{i,i_{+}}=-1$ and other elements are all zeros. The newly introduced matrices are defined as $\vect{\Lambda} = diag(\vect{a}) \in\mathbb{R}^{n\times n},\vect{\phi} = diag([\vect{\varphi}_i^{'T}]) \in\mathbb{R}^{n\times 2n}$, and $\vect{\phi}_{\perp} = diag([\vect{\varphi}_{i\perp}^{'T}]) \in\mathbb{R}^{n\times 2n}$.

Then we design the following finite-time enclosing controller as:

\begin{equation} \label{eq. finite-time-controller-single}
	\left \lbrace
	\begin{aligned}
		& \vect{u}_i = \vect{\varphi}_i^T w_{z,i} + \vect{\varphi}_{i,\perp}^T w_{\delta , i} + \vect{\dot{r}}_i \\
		&w_{z,i} = - k_z z^{\frac{1}{\alpha_2}}_i \\
		&w_{\delta,i} = k_{\delta} \delta^{\frac{1}{\alpha_3}}_i + w_d		
	\end{aligned} \right. 
\end{equation}
It can be rewritten into a compact form as 
\begin{equation} \label{eq. finite-time-controller}
	\left \lbrace
	\begin{aligned}
		& \vect{u} = \vect{\phi}^T \vect{w}_z + \vect{\phi}_{\perp}^T \vect{w}_{\delta} + \vect{\dot{r}} \\
		&\vect{w}_z = - k_z \vect{z}^{\frac{1}{\alpha_2}} \\
		&\vect{w}_{\delta} = k_{\delta}\vect{\delta}^{\frac{1}{\alpha_3}} + \vect{w}_d
	\end{aligned} \right. 
\end{equation}
where $k_z,k_{\delta}$ are two positive control gains, $\alpha_2 = \frac{p_2}{q_2},\alpha_3 = \frac{p_3}{q_3}$, $p_2> q_2 > 0, p_3 > q_3 > 0$ are all positive odds, $\vect{\dot{r}}$ is the derivative of the continuous estimator in \eqref{eq. smoothingEstimator}, $w_d$ is a constant indicating the desired rotating velocity for the enclosing formation.

Observing that $\Delta \vect{\dot{p}^{'}} = \vect{u} - \vect{\dot{r}}$ and $\vect{L}_{\theta} \vect{w}_d = \vect{0}$, we can substitute the controller in \eqref{eq. finite-time-controller} into the error dynamics \eqref{eq. originalDynamicsErrors}, which yields
\begin{equation}
	\left \lbrace
	\begin{aligned}
		\dot{\vect{z}} &= - \frac{k_z}{2} \vect{z}^{\frac{1}{\alpha_2}} \\
		\dot{\vect{\delta}} &= - k_{\delta} \vect{D}^{-1} \vect{\Lambda}^{-1} \vect{L}_{\theta} \vect{\delta}^{\frac{1}{\alpha_3}}
	\end{aligned} \right.
\end{equation}

Firstly, we show that the dynamic of the estimated included angle error $\vect{\delta}$ is globally asymptotically stable by the following lemma,
\begin{lemma} \label{lemma. homogenous1}
	Let $\alpha_3 = \frac{p_3}{q_3}$ and $p_3 > q_3 > 1$ be two positive odds, then the origin of 
	$$\dot{\vect{\delta}} = - k_{\delta}\vect{D}^{-1} \vect{\Lambda}^{-1} \vect{L}_{\theta} \vect{\delta}^{\frac{1}{\alpha_3}}$$ 
	is globally asymptotically stable.
\end{lemma}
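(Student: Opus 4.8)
The plan is to prove global asymptotic stability by a Lyapunov/LaSalle argument that exploits the cycle structure hidden in $\vect{L}_\theta$, which will also supply precisely the hypothesis the homogeneity theorem (Theorem~\ref{Th. HomogeneousFiniteTimeStable}) needs later. First I would record the structural facts to be used: $\vect{L}_\theta=\vect{I}-\vect{P}$ with $\vect{P}$ the cyclic permutation matrix $(\vect{P})_{i,i_{+}}=1$, so that $\vect{L}_\theta+\vect{L}_\theta^{T}=2\vect{I}-\vect{P}-\vect{P}^{T}$ is the Laplacian of the connected undirected cycle through the followers, hence positive semidefinite with kernel $\mathrm{span}\{\vect{1}\}$; the scalar map $s\mapsto s^{1/\alpha_3}$ is, because $p_3,q_3$ are odd, an odd strictly increasing bijection of $\mathbb{R}$; $\vect{D}=\mathrm{diag}([\rho_i'])$ and $\vect{\Lambda}=\mathrm{diag}(\vect{a})$ are positive definite since $\rho_i'>0$ and $a_i>0$; and $\vect{a}^{T}\vect{\delta}=0$ holds identically (from $\sum_i\bar\theta_i'=2\pi$), so $\vect{\delta}$ always evolves on the hyperplane $\mathcal{H}=\{\vect{a}^{T}\vect{\delta}=0\}$ and is moreover a priori bounded because each $\bar\theta_i'\in[0,2\pi)$.

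Next I would take the weighted potential
\[
V(\vect{\delta})=\sum_{i=1}^{n}\rho_i'\,a_i\int_{0}^{\delta_i}\sigma^{1/\alpha_3}\,d\sigma=\frac{\alpha_3}{\alpha_3+1}\sum_{i=1}^{n}\rho_i'\,a_i\,|\delta_i|^{(\alpha_3+1)/\alpha_3},
\]
whose weights $\rho_i'a_i$ are chosen precisely so that the factor $\vect{D}^{-1}\vect{\Lambda}^{-1}$ in the dynamics cancels against $\nabla V$; it is $C^1$ and, since $\rho_i'a_i>0$ and the exponent exceeds $1$, positive definite and radially unbounded on $\mathcal{H}$. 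Differentiating along $\dot{\vect{\delta}}=-k_\delta\vect{D}^{-1}\vect{\Lambda}^{-1}\vect{L}_\theta\vect{\delta}^{1/\alpha_3}$ (with $\vect{D}$ treated as frozen; see the last paragraph), setting $\vect{\xi}:=\vect{\delta}^{1/\alpha_3}$, and symmetrizing the resulting directed-cycle sum over the edges of the undirected cycle (equivalently, completing the square; this is the $c_i=b_i$ specialization of Lemma~\ref{lemma. sum_sum}), one obtains $\dot V=-\tfrac{k_\delta}{2}\sum_{i}(\xi_i-\xi_{i_{+}})^{2}\le 0$.

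I would then close with LaSalle: $\dot V=0$ forces $\xi_i=\xi_{i_{+}}$ along every edge, hence $\vect{\xi}=c\vect{1}$ by connectedness of the cycle, hence $\vect{\delta}=c\vect{1}$ by injectivity of $s\mapsto s^{1/\alpha_3}$, and then $\vect{a}^{T}\vect{\delta}=0$ with $a_i>0$ forces $c=0$. So the largest invariant subset of $\{\dot V=0\}$ is the origin, and since $V$ is radially unbounded on the invariant hyperplane $\mathcal{H}$ this yields global asymptotic stability of $\vect{\delta}=\vect{0}$. As a byproduct the field is homogeneous of degree $q=1/\alpha_3-1=q_3/p_3-1<0$, which is exactly what lets Theorem~\ref{Th. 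HomogeneousFiniteTimeStable} subsequently upgrade this to finite-time stability.

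The step I expect to be the real obstacle is making the weighting matrix $\vect{D}=\mathrm{diag}([\rho_i'])$ rigorous: $\rho_i'=z_i+\rho_d$ is itself a state, with $\dot z_i=-\tfrac{k_z}{2}z_i^{1/\alpha_2}$ finite-time convergent, so in general $\dot V$ also carries an indefinite term $\sum_i \dot z_i\, a_i\int_{0}^{\delta_i}\sigma^{1/\alpha_3}d\sigma$; the clean computation above is the one valid once $\vect{z}$ has settled (so that $\vect{D}=\rho_d\vect{I}$), and the honest argument is a cascade in which the a priori boundedness of $\vect{\delta}$ (angles in $[0,2\pi)$) rules out finite escape before that time and the $\vect{z}$-term then vanishes. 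A lighter technicality is that $s\mapsto s^{1/\alpha_3}$ is continuous but not locally Lipschitz at the origin, so one should invoke the invariance principle in its form for continuous vector fields (solutions exist by Peano, are complete by boundedness), and should work in the configuration region where the followers keep their angular order, so that each $\bar\theta_i'$, hence each $\delta_i$, stays continuous.
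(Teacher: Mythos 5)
Your proof is correct and, while it shares the paper's overall skeleton (a Lyapunov function built from powers of $\delta_i$, a derivative that becomes a negative sum of squared differences of $\delta_i^{1/\alpha_3}$, consensus on the kernel, and the constraint $\vect{a}^T\vect{\delta}=0$ killing the consensus value), your choice of Lyapunov function is genuinely different and in fact repairs two weak points in the paper's own argument. The paper uses the unweighted candidate $V_2\propto\sum_i\delta_i^{1+1/\alpha_3}$, so its $\dot V_2$ retains the factors $k_\delta/(a_i\rho_i)$ inside the double sum; it then symmetrizes via Lemma~\ref{lemma. sum_sum}, which requires a symmetric coefficient matrix, but $a_{ij}/(a_i\rho_i)$ is not symmetric unless all $a_i\rho_i$ coincide — and it silently replaces the directed cyclic matrix $\vect{L}_\theta$ (with $L_{i,i}=1$, $L_{i,i_+}=-1$) by a Laplacian over the symmetric communication graph. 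Your weighting by $\rho_i' a_i$ cancels $\vect{D}^{-1}\vect{\Lambda}^{-1}$ exactly, reduces $\dot V$ to the clean quadratic form $-k_\delta\vect{\xi}^T\vect{L}_\theta\vect{\xi}=-\tfrac{k_\delta}{2}\sum_i(\xi_i-\xi_{i_+})^2$ via the symmetric part $\vect{L}_\theta+\vect{L}_\theta^T$ of the true cycle structure, and so needs no appeal to Lemma~\ref{lemma. sum_sum} at all. The price, which you honestly identify, is that your weights $\rho_i'$ are themselves states, so an extra cascade/boundedness argument is needed to dispose of the $\dot\rho_i'$ term; the paper avoids introducing that term only by leaving the $1/\rho_i$ factors in $\dot V_2$, where they cause the symmetrization problem above. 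Your cascade remark and your note on non-Lipschitzness at the origin are both legitimate gaps in the paper's treatment rather than defects of your proof.
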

\begin{proof}
	Considering the Lyapunouv candidate function $V_{2} = \frac{\alpha_3+1}{\alpha_3}\vect{\delta}^T \vect{\delta}^{\frac{1}{\alpha_3}} = \frac{\alpha_3+1}{\alpha_3} \sum_{i=1}^n \delta_i^{1+\frac{1}{\alpha_3}}$, its derivative has:
	\begin{equation}
		\begin{aligned}
			\dot{V}_2 &= \sum_{i=1}^n \delta_i^{\frac{1}{\alpha_3}} \dot{\delta}_i = -\sum_{i=1}^n \delta_i^{\frac{1}{\alpha_3}} \frac{k_{\delta}}{a_i \rho_i} \sum_{j=1}^n a_{ij}( \delta_i^{\frac{1}{\alpha_3}} - \delta_j^{\frac{1}{\alpha_3}} ) \\
			&= -\frac{1}{2}\sum_{i=1}^n \frac{k_{\delta}}{a_i \rho_i} \sum_{j=1}^n a_{ij}( \delta_i^{\frac{1}{\alpha_3}} - \delta_j^{\frac{1}{\alpha_3}} )^2  \le 0
		\end{aligned}
	\end{equation}
	The equality holds if and only if $ \vect{\delta} = e\vect{1}$, where $e$ is a constant. Since the error vector has the relationship of $\vect{a}^T \vect{\delta} = \vect{0}$, then the only solution for the equality is $e=0$. Thus, we have $\dot{V}_2\le 0$ with $\dot{V}_2=0$ if and only if $\vect{\delta} = \vect{0}$. Therefore, the origin is globally asymptotically stable. 
\end{proof}

\begin{figure}[tbp]
	\centering
	\includegraphics[width=\linewidth]{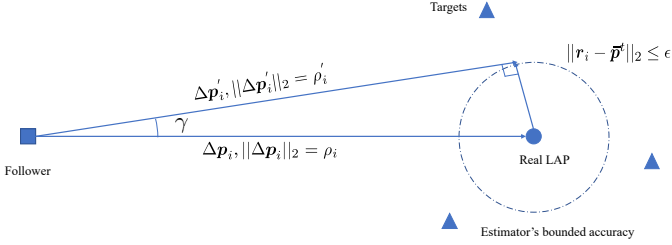}
	\caption{Illustration for the impact of bounded accuracy}
	\label{fig. estimatedErrorWithReal}
	\vspace{-15pt}
\end{figure}

Next, the fact that dynamic of the estimated included angle error $\vec{\delta}$ is a homogeneous system can be guaranteed by,
\begin{lemma} \label{lemma. homogenous2}
	Define the vector field $\vect{H}(\vect{\delta}) = - k_{\delta}\vect{D}^{-1} \vect{\Lambda}^{-1} \vect{L}_{\theta} \vect{\delta}^{\frac{1}{\alpha_3}}$, then it is homogeneous of degree $\frac{1}{\alpha_3} - 1$ with respect to the standard dilation. 
\end{lemma}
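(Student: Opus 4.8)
The plan is to check the defining scaling identity for a homogeneous vector field directly, component by component. First I would write $\vect{H}$ in coordinates. Since $\vect{D}^{-1}\vect{\Lambda}^{-1}=\mathrm{diag}\bigl([\,1/(a_i\rho_i^{'})\,]\bigr)$ and $\vect{L}_{\theta}$ has $L_{i,i}=1$, $L_{i,i_{+}}=-1$ with all other entries zero, the $i$-th component is
\[
H_i(\vect{\delta})\;=\;-\frac{k_{\delta}}{a_i\rho_i^{'}}\sum_{j=1}^{n}L_{ij}\,\delta_j^{\frac{1}{\alpha_3}}\;=\;-\frac{k_{\delta}}{a_i\rho_i^{'}}\Bigl(\delta_i^{\frac{1}{\alpha_3}}-\delta_{i_{+}}^{\frac{1}{\alpha_3}}\Bigr),
\]
where $a_i>0$ and $\rho_i^{'}>0$ are regarded as fixed parameters of the $\vect{\delta}$-subsystem.

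Next I would apply the standard dilation $\Delta_{\lambda}\vect{\delta}=(\lambda\delta_1,\dots,\lambda\delta_n)$ with $\lambda>0$. Because $\tfrac{1}{\alpha_3}=\tfrac{q_3}{p_3}$ with $p_3,q_3$ positive, we have $(\lambda\delta_j)^{1/\alpha_3}=\lambda^{1/\alpha_3}\delta_j^{1/\alpha_3}$ for every $j$, so each entry of the vector $(\delta_1^{1/\alpha_3},\dots,\delta_n^{1/\alpha_3})$ is multiplied by the same factor $\lambda^{1/\alpha_3}$. Since $H_i$ depends on this vector only through multiplication by the constant matrix $-k_{\delta}\vect{D}^{-1}\vect{\Lambda}^{-1}\vect{L}_{\theta}$ --- a linear operation --- the factor pulls straight through, giving
\[
H_i(\lambda\delta_1,\dots,\lambda\delta_n)=\lambda^{\frac{1}{\alpha_3}}\,H_i(\delta_1,\dots,\delta_n),\qquad i=1,\dots,n.
\]

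Finally I would match this with the definition of a homogeneous system: $\dot{\vect{x}}=f(\vect{x})$ is homogeneous of degree $q$ when $f_i(\Delta_{\lambda}\vect{x})=\lambda^{q+1}f_i(\vect{x})$. Here the exponent equals $\tfrac{1}{\alpha_3}=q+1$, hence $q=\tfrac{1}{\alpha_3}-1$, which is exactly the asserted degree, and the proof is finished. I do not expect a genuine obstacle; the one point that needs a sentence of justification is that the diagonal factors $\rho_i^{'}$ are strictly positive and may legitimately be frozen while the $\vect{\delta}$-channel is analyzed separately from the radius channel --- this is what makes $\vect{D}^{-1}$ well defined and turns $-k_{\delta}\vect{D}^{-1}\vect{\Lambda}^{-1}\vect{L}_{\theta}$ into a bona fide constant matrix in the homogeneity computation. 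In passing, since $\alpha_3=p_3/q_3>1$ one also gets $q=\tfrac{1}{\alpha_3}-1<0$, which is what will later be combined with Lemma~\ref{lemma. homogenous1} and Theorem~\ref{Th. HomogeneousFiniteTimeStable} to conclude finite-time stability, though it plays no role in the present lemma.
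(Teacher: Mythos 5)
Your proposal is correct and follows essentially the same route as the paper: write out the $i$-th component, use $(\lambda\delta_j)^{1/\alpha_3}=\lambda^{1/\alpha_3}\delta_j^{1/\alpha_3}$ for the odd-ratio exponent, pull the factor through the constant matrix, and read off the degree from $f_i(\Delta_\lambda\vect{x})=\lambda^{q+1}f_i(\vect{x})$. Your explicit remark that $\rho_i^{'}$ must be frozen as a positive parameter for the matrix to be constant is a point the paper leaves implicit, but the argument is the same.
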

\begin{proof}
	Firstly, we show that $\delta_i^{\frac{1}{\alpha_3}}$ is homogeneous of degree $\frac{1}{\alpha_3}$ with respect to the standard dilation, i.e. for a $\lambda>0$:
	$$(\lambda \delta_i)^{\frac{1}{\alpha_3}} = \lambda ^{\frac{1}{\alpha_3}}  \delta_i^{\frac{1}{\alpha_3}}  $$
	Then, let's denote the $i$-th component of the vector field $\vect{H}(\vect{\delta})$ as $\vect{H}_i(\vect{\delta})= - \frac{1}{a_i \rho_i} \sum_{j=1}^n a_{ij}( \delta_i^{\frac{1}{\alpha_3}} - \delta_j^{\frac{1}{\alpha_3}} ) $, whereby we can get:
	$$\begin{aligned}
		\vect{H}_i(\lambda\vect{\delta}) &= - \frac{1}{a_i \rho_i} \sum_{j=1}^n a_{ij}\left[  (\lambda \delta_i)^{\frac{1}{\alpha_3}} - (\lambda \delta_j)^{\frac{1}{\alpha_3}} \right] \\
		& = - \lambda^{\frac{1}{\alpha_3}} \frac{1}{a_i \rho_i} \sum_{j=1}^n a_{ij}\left(  \delta_i^{\frac{1}{\alpha_3}} - \delta_j^{\frac{1}{\alpha_3}} \right) \\
		&  = \lambda^{\frac{1}{\alpha_3}} \vect{H}_i(\vect{\delta}) 
	\end{aligned}$$
	Therefore, we say that the vector filed is homogeneous of degree $\frac{1}{\alpha_3} - 1$ with respect to the standard dilation. 
\end{proof}

Finally, we can conclude our main results of the finite-time controller as 

\begin{theorem} \label{Th. estimatedControllerTheoremFiniteTime}
	Given Assumptions~\ref{ass. undirectNetwork} - \ref{ass. boundedVelocityLeader}, consider the MAS composed of $m$ leaders in \eqref{eq. leadersDynamics} and $n$ followers in \eqref{eq. followersDynamics}, then if the each follower maintains an estimator in \eqref{eq. smoothingEstimator} and take the control input as \eqref{eq. finite-time-controller-single}, then the origin of the estimated error dynamics in \eqref{eq. originalDynamicsErrors} are finite-time stable, i.e. there exist settling time $T_2,T_3$ such that:
	\begin{equation}
		\left \lbrace \begin{aligned}
		     | \rho_i^{'} - \rho_d | = 0, \forall t > T_2\\
			| \bar{\theta}_i^{'} - a_i | = 0, \forall t > T_3
		\end{aligned} \right.
	\end{equation}
\end{theorem}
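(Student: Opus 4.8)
The plan is to exploit the fact that, once the controller \eqref{eq. finite-time-controller-single} is substituted into the error dynamics \eqref{eq. originalDynamicsErrors}, the estimated errors decouple into the two subsystems
\begin{equation}
	\dot{\vect{z}} = -\frac{k_z}{2}\vect{z}^{\frac{1}{\alpha_2}}, \qquad \dot{\vect{\delta}} = -k_\delta \vect{D}^{-1}\vect{\Lambda}^{-1}\vect{L}_\theta\vect{\delta}^{\frac{1}{\alpha_3}},
\end{equation}
which is precisely the reduced form already displayed in the excerpt (using $\vect L_\theta \vect w_d = \vect 0$). I would then establish finite-time convergence of each channel separately: Theorem~1 for the radial channel $\vect z$, and the homogeneity machinery (Lemmas~\ref{lemma. homogenous1}--\ref{lemma. homogenous2} combined with Theorem~\ref{Th. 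HomogeneousFiniteTimeStable}) for the angular channel $\vect\delta$.

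For the radial channel I would take $V_z = \tfrac12 \vect{z}^T\vect{z} = \tfrac12\sum_{i=1}^n z_i^2$. Since $p_2,q_2$ are odd with $p_2>q_2$, the map $z_i \mapsto z_i^{1/\alpha_2}$ is odd and $z_i z_i^{1/\alpha_2} = |z_i|^{1+1/\alpha_2}\ge 0$, so
\begin{equation}
	\dot V_z = -\frac{k_z}{2}\sum_{i=1}^n |z_i|^{1+\frac{1}{\alpha_2}} \le -\frac{k_z}{2}\Bigl(\sum_{i=1}^n z_i^2\Bigr)^{\frac{1+1/\alpha_2}{2}} = -\frac{k_z}{2}(2V_z)^{\frac{1+1/\alpha_2}{2}},
\end{equation}
where the inequality is the left half of Lemma~\ref{lemma. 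SummationFraction} applied with exponent $p=(1+1/\alpha_2)/2$ to the vector $(z_1^2,\dots,z_n^2)$. Because $1/\alpha_2<1$ this exponent lies in $(1/2,1)\subset(0,1)$, so Theorem~1 applies and furnishes a finite settling time $T_2$ with $\vect z(t)=\vect 0$, i.e. $\rho_i'(t)=\rho_d$, for all $t\ge T_2$.

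For the angular channel, Lemma~\ref{lemma. homogenous1} gives global asymptotic stability of the origin on the admissible hyperplane $\vect a^T\vect\delta=\vect 0$, and Lemma~\ref{lemma. homogenous2} shows the vector field is homogeneous of degree $q=\tfrac{1}{\alpha_3}-1<0$ (negative because $\alpha_3>1$); Theorem~\ref{Th. HomogeneousFiniteTimeStable} then yields finite-time stability, hence a settling time $T_3$ with $\vect\delta(t)=\vect 0$, equivalently $\bar\theta_i'(t)=a_i$, for all $t\ge T_3$. The subtle point is that $\vect D=\operatorname{diag}([\rho_i'])$ depends on $\vect z$ through $\rho_i'=z_i+\rho_d$, so the $\vect\delta$-subsystem is really a cascade driven by $\vect z$ rather than a genuinely autonomous homogeneous system. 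I would dispatch this by arguing that $\rho_i'$ stays strictly positive and bounded along trajectories (so $\vect D^{-1}$ is well defined and there is no finite escape), and that for $t\ge T_2$ the driving signal is frozen at $\rho_i'\equiv\rho_d$, which makes the reduced $\vect\delta$-dynamics exactly the autonomous homogeneous system of Lemmas~\ref{lemma. homogenous1}--\ref{lemma. homogenous2}; applying Theorem~\ref{Th. HomogeneousFiniteTimeStable} from time $T_2$ onward then gives $T_3\ge T_2$.

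I expect the main obstacle to be exactly this coupling/well-posedness issue for the angular channel: ruling out $\rho_i'\to 0$ (which would make $\vect D^{-1}$ singular) and showing $\vect\delta$ remains bounded over the transient $[0,T_2]$, so that the homogeneous finite-time argument can legitimately be invoked on $[T_2,\infty)$. Everything else — the Lyapunov estimate for $\vect z$ and the bookkeeping converting $z_i=0$ and $\delta_i=0$ back into the stated equalities $\rho_i'=\rho_d$ and $\bar\theta_i'=a_i$ — is routine once the earlier lemmas are in hand.
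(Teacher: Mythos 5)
Your proposal follows essentially the same route as the paper's proof: the Lyapunov estimate $\dot V_z \le -cV_z^{(\alpha_2+1)/(2\alpha_2)}$ via Lemma~\ref{lemma. SummationFraction} for the radial channel, and Lemmas~\ref{lemma. homogenous1}--\ref{lemma. homogenous2} plus Theorem~\ref{Th. HomogeneousFiniteTimeStable} for the angular channel. The one place you go beyond the paper is the cascade issue you flag --- $\vect{D}=\mathrm{diag}([\rho_i'])$ depends on $\vect{z}$, so the $\vect{\delta}$-subsystem is not autonomous before $T_2$ --- which the paper silently ignores by treating $\rho_i$ as constant in Lemmas~\ref{lemma. homogenous1}--\ref{lemma. homogenous2}; your patch of invoking the homogeneity argument only on $[T_2,\infty)$ (after checking boundedness and positivity of $\rho_i'$ on the transient) is a legitimate and in fact more careful way to close that gap.
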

\begin{proof}
	On the one hand, given the results from Lemma~\ref{lemma. homogenous1} - \ref{lemma. homogenous2}, we can immediately derive the property of finite time convergence for the estimated included angle errors $\vect{\delta}$ by recalling Theorem~\ref{Th. HomogeneousFiniteTimeStable}. Therefore, there exists a settling time $T_3$ such that $ | \bar{\theta}_i^{'} - a_i | = 0, \forall t>T_3$. 
	
	On the other hand, regarding the estimated relative distance errors $\vect{z}$, let consider the Lyapunouv function $V_2 = \frac{1}{2} \vect{z}^T\vect{z} = \frac{1}{2}\sum_{i=1}^{n} z_{i}^2$. According to lemma~\ref{lemma. SummationFraction},  its derivative has:
	\begin{equation}
		\begin{aligned}
			\dot{V}_2 &= -\sum_{i=1}^{n} z_{i}\dot{z}_{i} = -k_z \sum_{i=1}^{n} z_{i}^{1+\frac{1}{\alpha_2}} \\
			& = -k_z \sum_{i=1}^{n} \left( z_{i}^2 \right)^{\frac{\alpha_2+1}{2\alpha_2}} \le  - k_z \left( \sum_{i=1}^n z_{i}^2 \right)^{\frac{\alpha_2+1}{2\alpha_2}}  = - k_z V_2^{\frac{\alpha_2+1}{2\alpha_2}} 
		\end{aligned}
	\end{equation}
	Therefore, the estimated relative distance errors $\vect{z}$ will converge to zeros in finite time $T_2 \le \frac{2\alpha_2}{k_z(\alpha_2 - 1)} V_2(0)^{\frac{\alpha_2 -1 }{2\alpha_2}}$. 
	
	This ends the proof of Theorem~\ref{Th. estimatedControllerTheoremFiniteTime}.
\end{proof}

\begin{figure}[tbp]
	\centering
	\includegraphics[width=0.6\linewidth]{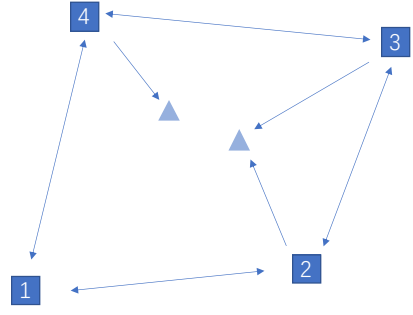}
	\caption{The network topology in the simulation case, where the rectangles and squares denote the followers and leaders, respectively. The arrows indicate connections within MAS. Note that the topology among followers is undirect and each leader is observed by at least one follower.}
	\label{fig. simulationTopology}
	\vspace{-15pt}
\end{figure}

\subsection{Bounded error} \label{subsec. bounded-error}

The previous subsection shows that the proposed controller in \eqref{eq. finite-time-controller-single} can achieve the desired spacing patter $\mathbb{P}$ in finite time if the estimator can exactly track the real LAP in finite-time. However, the estimator designed in \eqref{eq. smoothingEstimator} can only guarantee a bounded accuracy in finite time. Therefore, the impact of the bounded accuracy on the controller is discussed in this subsection. Let define the real errors of both relative distance and the included angle as 
\begin{equation}
	\left \lbrace
	\begin{aligned}
		e_{\rho,i} = \rho_i - \rho_d \\
		e_{\delta,i} = \frac{\bar{\theta}_i}{a_i} -1, 
	\end{aligned} \right.
\end{equation} 

\begin{figure*}[tbp]
	\centering
	\begin{subfigure}[b]{0.49\textwidth}
		\begin{subfigure}[b]{1\textwidth}
			\centering
			\includegraphics[width=10cm]{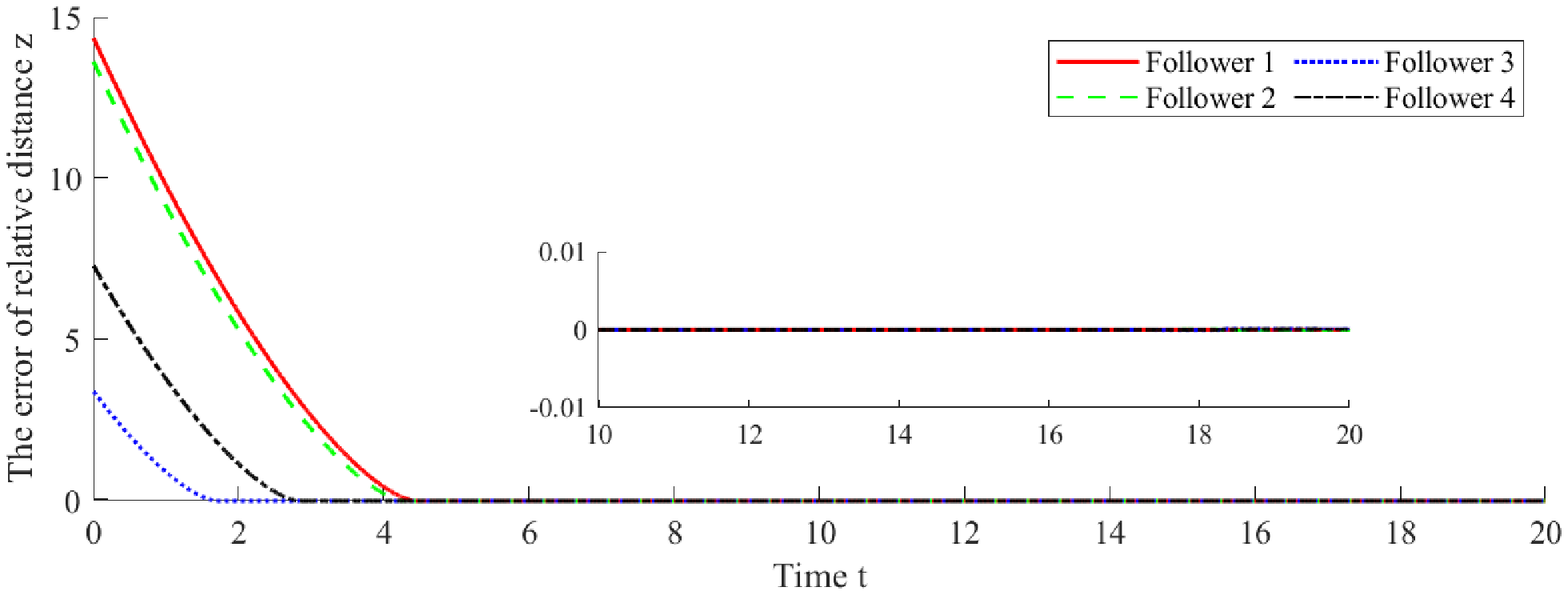}
			\caption{The error of estimated relative distance}
			\label{fig. z}
		\end{subfigure}
		\null \hfill \\
		\begin{subfigure}[b]{1\textwidth}
			\centering
			\includegraphics[width=10cm]{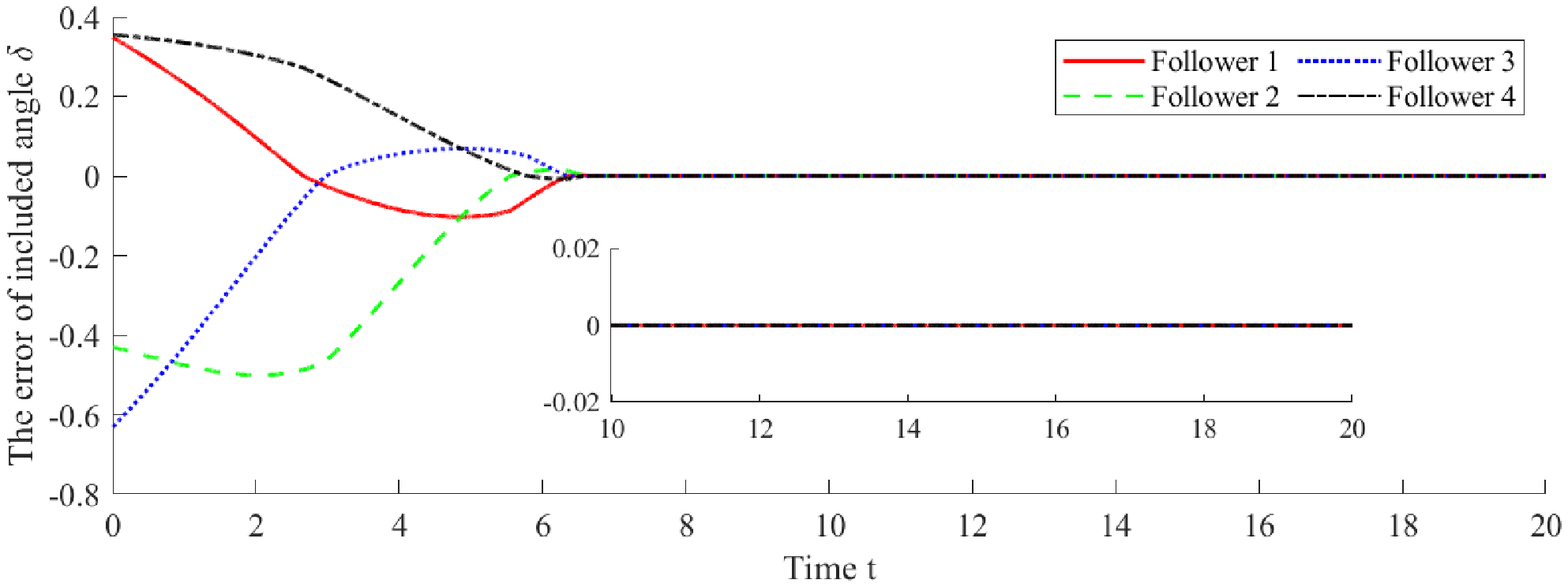}
			\caption{The error of estimated included angle}
			\label{fig. delta}
		\end{subfigure}
		\null \hfill \\
		\begin{subfigure}[b]{1\textwidth}
			\centering
			\includegraphics[width=10cm]{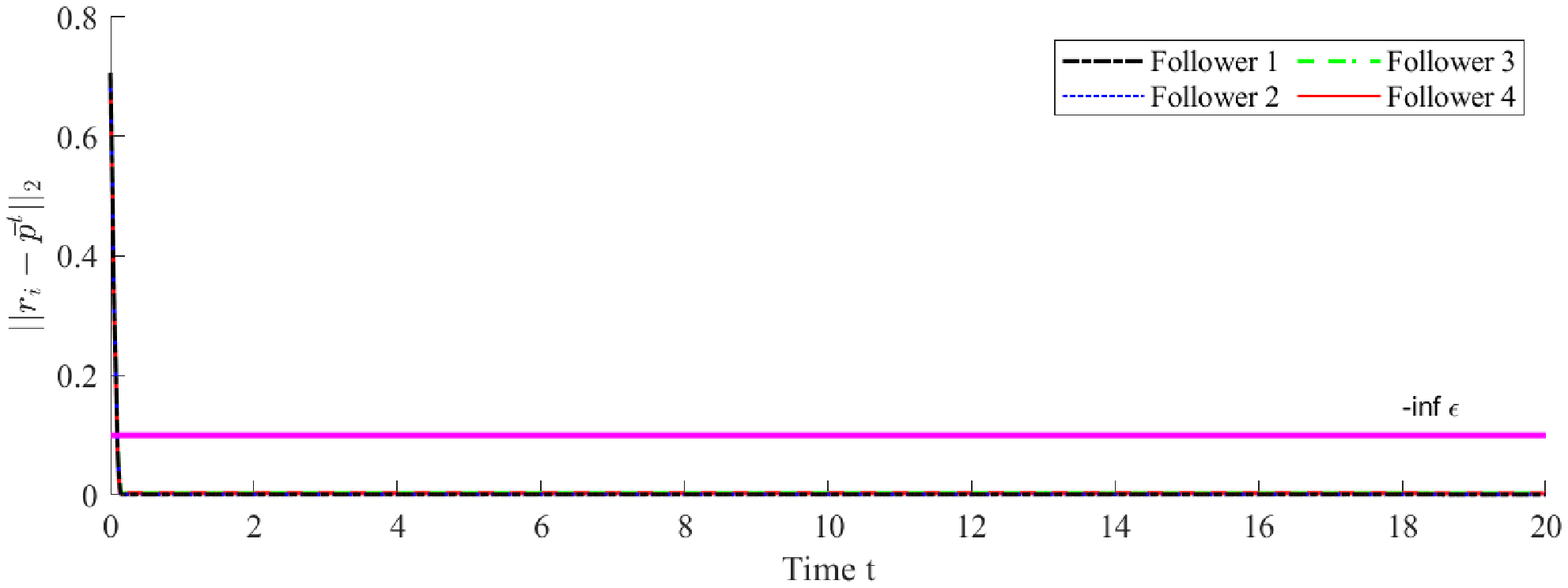}
			\caption{The estimated error between real LAP and the output of estimator }
			\label{fig. estimatorError}
		\end{subfigure}
	\end{subfigure}
	\hfill
	\begin{subfigure}[b]{0.49\textwidth}
		\begin{subfigure}[b]{1\textwidth}
			\centering
			\includegraphics[width=8.5cm]{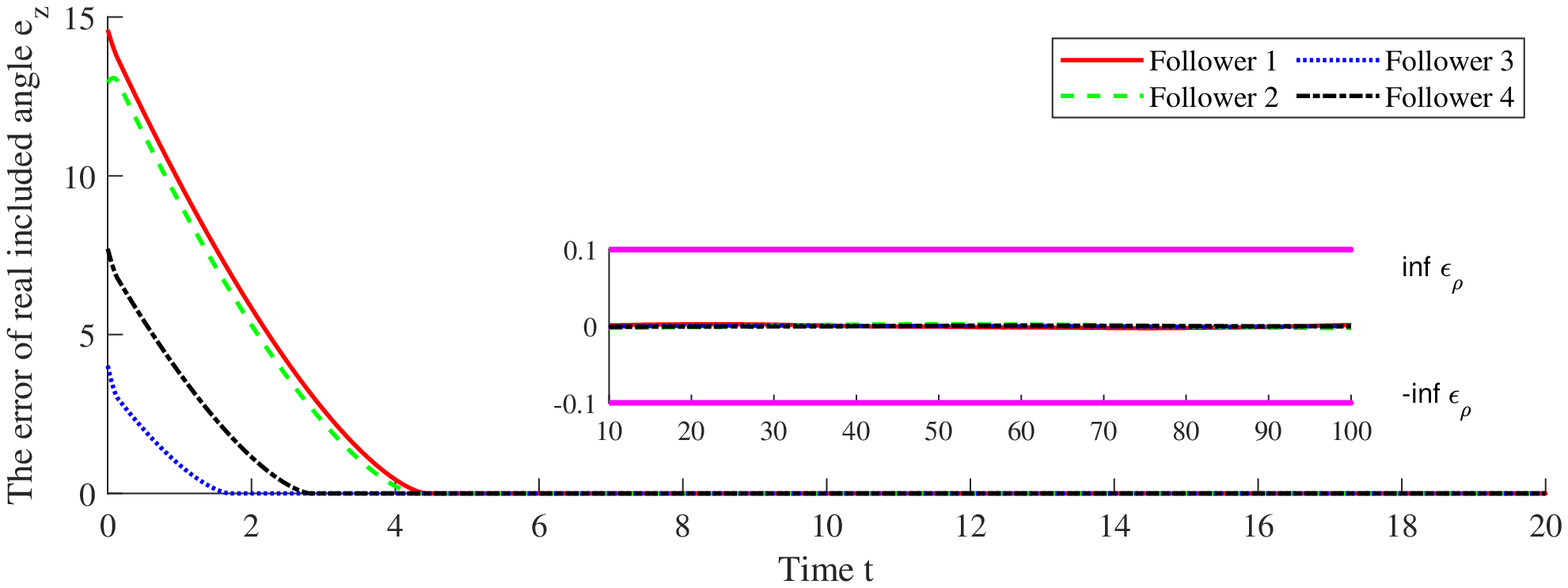}
			\caption{The error of real relative distance}
			\label{fig. e_z}
		\end{subfigure}
		\null \hfill \\
		\begin{subfigure}[b]{1\textwidth}
			\centering
			\includegraphics[width=8.5cm]{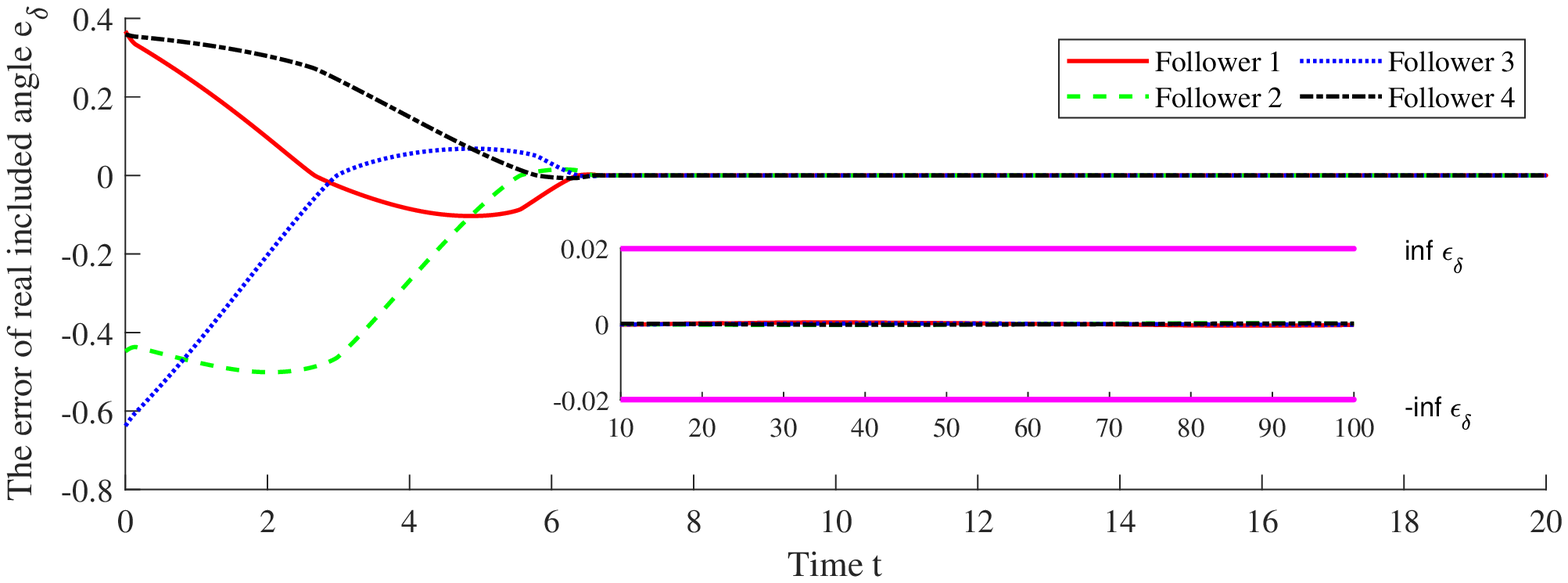}
			\caption{The error of real included angle}
			\label{fig. e_delta}
		\end{subfigure}
		\null \hfill \\
		\begin{subfigure}[b]{1\textwidth}
			\centering
			\includegraphics[width=8.5cm]{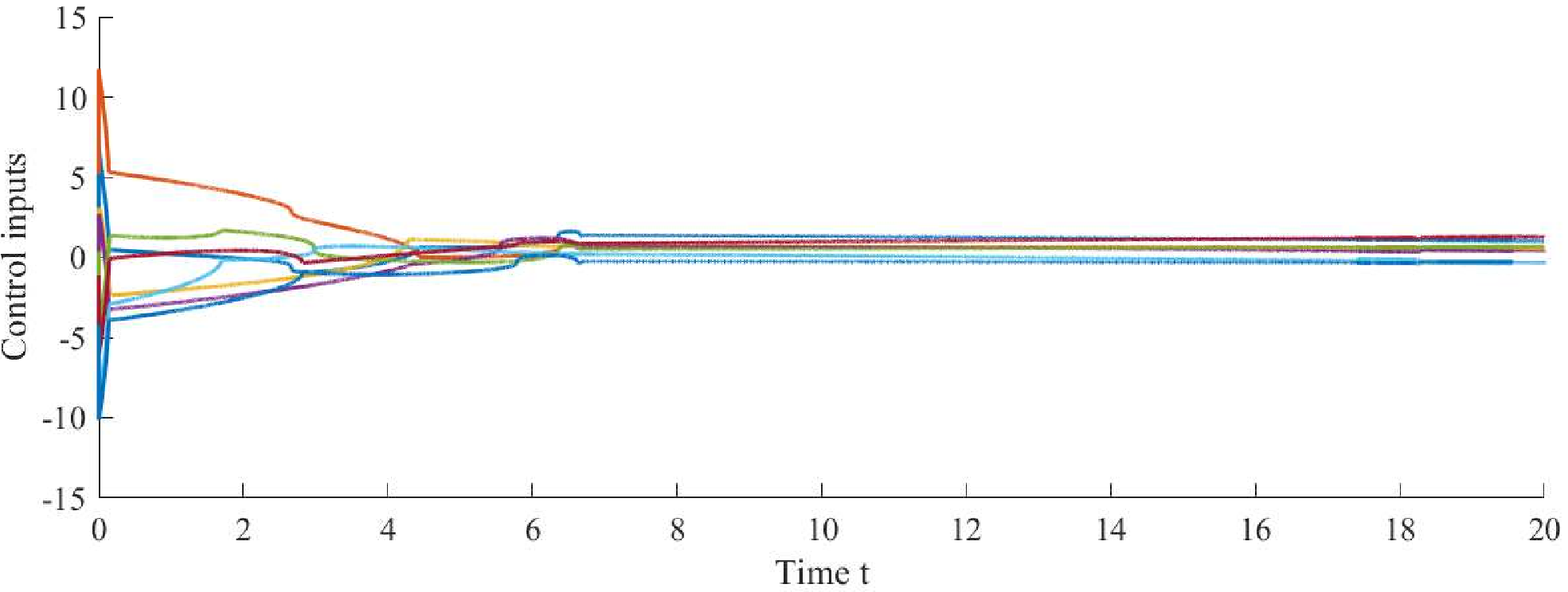}
			\caption{The control commands for all followers}
			\label{fig. controlCommands}
		\end{subfigure}
	\end{subfigure}
	\caption{The errors of both relative distances and included angles }
	\label{fig. performance}
	\vspace{-15pt}
\end{figure*}

Then we present our analysis as the following theorem,
\begin{theorem}\label{Th. errorRealEstimator}
	Given Assumptions~\ref{ass. undirectNetwork} - \ref{ass. boundedVelocityLeader}, consider the MAS composed of $m$ leaders in \eqref{eq. leadersDynamics} and $n$ followers in \eqref{eq. followersDynamics}, then if the each follower maintains an estimator in \eqref{eq. smoothingEstimator} and take the control input as \eqref{eq. finite-time-controller-single}, then there exist positive constants $\epsilon_{\rho}$ and $\epsilon_{\delta}$, such that 
	\begin{equation}
		\left \lbrace \begin{aligned}
			 |e_{\rho,i}| < \epsilon_{\rho},\forall t > T_1+T_2+T_3 \\
			 |e_{\delta,i}| < \epsilon_{\delta}, \forall t > T_1+T_2+T_3 \\
		\end{aligned} \right.
	\end{equation}
\end{theorem}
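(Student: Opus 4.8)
The plan is to view Theorem~\ref{Th. errorRealEstimator} as a perturbation of Theorem~\ref{Th. estimatedControllerTheoremFiniteTime}: once the \emph{estimated} errors $z_i,\delta_i$ have vanished, the only remaining discrepancy between the estimated pair $(\rho_i^{'},\bar{\theta}_i^{'})$ and the real pair $(\rho_i,\bar{\theta}_i)$ is the estimator offset $\vect{r}_i-\vect{\bar{p}}^t$, whose norm is bounded by $\epsilon$ after time $T_1$ (Theorem~\ref{Th. smoothingEstimator}). So it suffices to propagate that bound through the two nonlinear maps $\Delta\vect{p}_i^{'}\mapsto\rho_i^{'}=\|\Delta\vect{p}_i^{'}\|$ and $\Delta\vect{p}_i^{'}\mapsto\theta_i^{'}$, using the identity $\Delta\vect{p}_i=\vect{p}_i-\vect{\bar{p}}^t=\Delta\vect{p}_i^{'}+(\vect{r}_i-\vect{\bar{p}}^t)$.

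First I would assemble the finite-time facts already available. By Theorem~\ref{Th. estimatedControllerTheoremFiniteTime}, $z_i(t)=0$ for $t>T_2$ and $\delta_i(t)=0$ for $t>T_3$, so for $t>T_1+T_2+T_3$ we simultaneously have $\rho_i^{'}=\rho_d$, $\bar{\theta}_i^{'}=a_i$, and, by Theorem~\ref{Th. smoothingEstimator}, $\|\vect{r}_i-\vect{\bar{p}}^t\|\le\epsilon$. Two preparatory observations are worth recording: (a) because $\dot z_i=-\tfrac{k_z}{2}z_i^{1/\alpha_2}$ is scalar and autonomous, $\rho_i^{'}(t)$ varies monotonically from $\rho_i^{'}(0)$ toward $\rho_d$ and hence stays bounded below by $\underline{\rho}:=\min\{\rho_i^{'}(0),\rho_d\}>0$, which is what keeps $\vect{D}^{-1}$ in \eqref{eq. originalDynamicsErrors} well defined; (b) by the tuning rules of Theorem~\ref{Th. smoothingEstimator} one may first shrink $\epsilon$ (via $k_e,\alpha_1$) so that $\epsilon<\rho_d$ and so that a deviation of size $2\arctan\!\big(\epsilon/(\rho_d-\epsilon)\big)$ about each $a_i$ does not reach the endpoints of the interval on which $\varsigma_i$ is constant, which requires the admissible pattern to have each $a_i$ bounded away from $0$ and $2\pi$.

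Next comes the distance channel, which is the easy one. From $\Delta\vect{p}_i=\Delta\vect{p}_i^{'}+(\vect{r}_i-\vect{\bar{p}}^t)$ and the triangle inequality, $|\rho_i-\rho_i^{'}|\le\|\vect{r}_i-\vect{\bar{p}}^t\|\le\epsilon$; combined with $z_i=\rho_i^{'}-\rho_d=0$ this gives $|e_{\rho,i}|=|\rho_i-\rho_d|\le|\rho_i-\rho_i^{'}|+|z_i|\le\epsilon$ for all $t>T_1+T_2+T_3$, so $\epsilon_{\rho}$ can be taken as $\epsilon$ (or any larger constant). For the angle channel, $\theta_i-\theta_i^{'}$ is the planar angle between $\Delta\vect{p}_i^{'}$, which has length $\rho_d$, and $\Delta\vect{p}_i^{'}+(\vect{r}_i-\vect{\bar{p}}^t)$; decomposing $\vect{r}_i-\vect{\bar{p}}^t$ into the components parallel and orthogonal to $\Delta\vect{p}_i^{'}$ shows this angle is at most $\arctan\!\big(\epsilon/(\rho_d-\epsilon)\big)$. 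Since $\epsilon$ was chosen small enough that $\varsigma_i$ cannot jump, $|\bar{\theta}_i-a_i|=|\bar{\theta}_i-\bar{\theta}_i^{'}|\le|\theta_{i_{+}}-\theta_{i_{+}}^{'}|+|\theta_i-\theta_i^{'}|\le 2\arctan\!\big(\epsilon/(\rho_d-\epsilon)\big)$, hence $|e_{\delta,i}|\le\frac{2}{\min_j a_j}\arctan\!\big(\epsilon/(\rho_d-\epsilon)\big)=:\epsilon_{\delta}$. Both estimates hold for $t>T_1+T_2+T_3$, which is the assertion, and both $\epsilon_{\rho},\epsilon_{\delta}$ shrink to zero as $\epsilon\to0$, i.e. as the estimator gains are tuned, matching the adjustable-region claim.

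I expect the angle channel to be the only real obstacle. The delicate points are (i) converting the norm bound $\|\vect{r}_i-\vect{\bar{p}}^t\|\le\epsilon$ into the angular bound $\arctan\!\big(\epsilon/(\rho_d-\epsilon)\big)$, which is exactly where the admissibility requirement $\rho_d>\max_i\|\vect{p}_i^t-\vect{\bar{p}}^t\|$ together with the extra smallness of $\epsilon$ is used (so the offset cannot be large enough to reverse the sign of the angle), and (ii) ruling out a jump of the piecewise term $\varsigma_i$ under the perturbation, for which one needs each $a_i$ strictly inside $(0,2\pi)$. The distance channel and the bookkeeping of the three settling times are routine.
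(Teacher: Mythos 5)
Your proposal is correct and follows essentially the same route as the paper: decompose the real errors as the (vanished) estimated errors $z_i,\delta_i$ plus the estimator offset $\vect{r}_i-\vect{\bar{p}}^t$, bound the distance channel by $\epsilon$ via the triangle inequality, and convert the $\epsilon$-offset into an angular deviation of order $\arctan(\epsilon/\rho_d)$ on the angle channel. Your write-up is in fact slightly more careful than the paper's --- the worst-case angular deviation of a length-$\rho_d$ vector under an $\epsilon$-perturbation is $\arcsin(\epsilon/\rho_d)$, which your $\arctan\bigl(\epsilon/(\rho_d-\epsilon)\bigr)$ genuinely dominates while the paper's $\arctan(\epsilon/\rho_d)$ does not, and you also address the non-jumping of $\varsigma_i$ and use $\min_j a_j$ where the paper leaves an $i$-dependent denominator.
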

\begin{proof}
	Firstly, it's obvious that
	$$e_{\rho,i} = z_i + \rho_i - \rho_i^{'}$$
	and 
	$$
	\begin{aligned}
		e_{\delta,i} &= \delta_i + \frac{\bar{\theta}_i - \bar{\theta}^{'}_i}{a_i} \\
		&= \delta_i + \frac{ \left( \theta_{i+} - \theta_{i+}^{'} \right) - \left(   {\theta}_i - {\theta}^{'}_i \right) }{a_i} \\
		&= \delta_i + \frac{ \gamma_{i+} - \gamma_i }{a_i} 
	\end{aligned}
	$$
	where $\gamma_i = \theta_{i} - \theta_{i}^{'}$ is the error between the real included angle and the estimated one as depicted in Fig.~\ref{fig. estimatedErrorWithReal}. Then, we can derive,
	\begin{equation}
		|e_{\rho,i}| < |\rho_i - \rho_i^{'}| < \epsilon,\forall t > T_1+T_2+T_3
	\end{equation}
	and 
	\begin{equation}
		\begin{aligned}
			& \tan(\gamma_i) = \frac{\epsilon}{\rho_d}\\
			\Rightarrow &  |e_{\delta,i}| = |\frac{ \gamma_{i+} - \gamma_i }{a_i}| \le 2\frac{ \max |\gamma_i| }{a_i} < \frac{2 \arctan \frac{\epsilon}{\rho_d}}{a_i}
		\end{aligned}	
	\end{equation}
	$\forall t > T_1+T_2+T_3$. Therefore, we can define $\epsilon_{\rho} = \epsilon$ and $\epsilon_{\delta} = \frac{2 \arctan \frac{\epsilon}{\rho_d}}{a_i}$.
	
	This ends the proof of Theorem~\ref{Th. errorRealEstimator}.
\end{proof}

\section{Simulation}

\begin{figure*}[tbp]
	\centering
	\includegraphics[width=\linewidth]{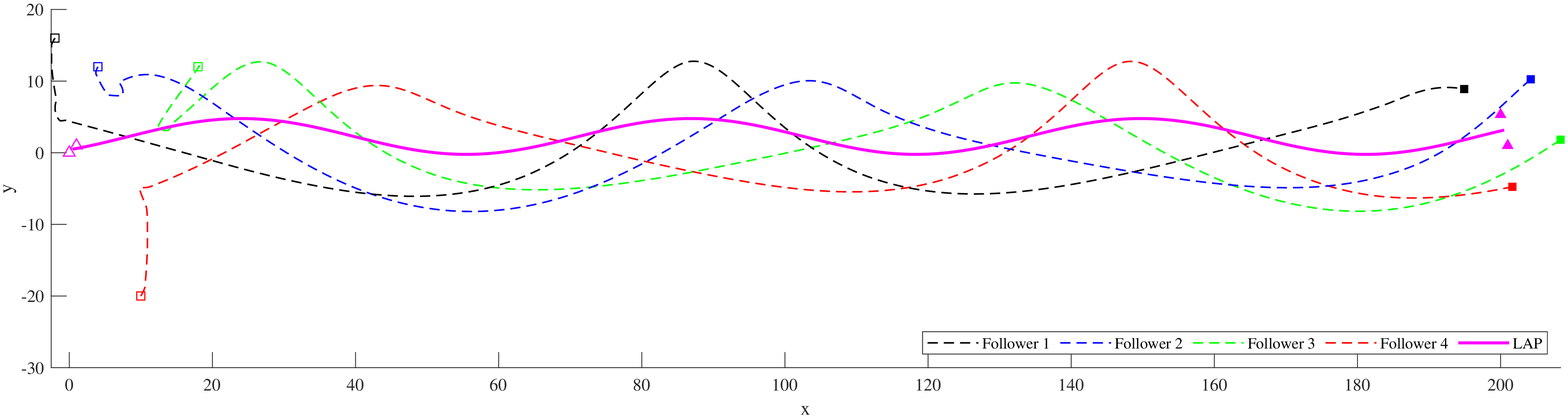}
	\caption{Trajectories of all four followers and the real LAP over 200s}
	\label{fig. trajectory}
	\vspace{-15pt}
\end{figure*}

In the simulation, we consider the case with four followers and two leaders moving in 2-D plane. The network topology is demonstrated in Fig.~\ref{fig. simulationTopology}, which meets the requirements of the Assumption.~\ref{ass. undirectNetwork} and ~\ref{ass. leastObservation}. The leaders are moving across the plane with bounded velocities that are unknown to all followers, which are respectively,
$$
\begin{aligned}
	\vect{v}_1 = \begin{bmatrix}
		1 \\
		0
	\end{bmatrix} , \quad
	\vect{v}_2 = \begin{bmatrix}
		1\\
		\frac{1}{2}\sin(\frac{t}{10}+ \frac{\pi}{4}) 
	\end{bmatrix}. 
\end{aligned}
$$
Therefore, the upper bound of target velocities is $\beta = \frac{\sqrt{5}}{2}$. The initial positions for both leaders and followers are listed in the Tab.~\ref{Tab. initialPositions}. 

The desired spacing pattern $\mathbb{P}$ is configured as follows. The desired relative distance between each agent and the real LAP is set to $\rho_d = 8$ m. The desired included angles between any pair of consecutive agents are fixed by $\vect{a} = [\frac{2\pi}{5},\frac{2\pi}{5},\frac{2\pi}{5},\frac{4\pi}{5}]^T$. The controller parameters are designed as in Tab.~\ref{Tab. ControllerParameters}. Through simple calculation, we derive that
$$ \left \lbrace
\begin{aligned}
	&\inf_{\eta \rightarrow 0} ( \epsilon, \epsilon_{\rho} ) = \left( \frac{2n\beta}{nk_e} \right)^{2\alpha_1} \approx  0.099949 \\
	&\inf_{\eta \rightarrow 0} \epsilon_{\delta} = \frac{2 \arctan \frac{\epsilon}{\rho_d}}{\min a_i} \approx 0.0199
\end{aligned} \right.
$$

\begin{table}[htbp]	
	\caption{Initial positions for the simulation case}
	\label{Tab. initialPositions}
	\centering
	\begin{tabular}{ccccccc}
		\hline
		\multirow{2}{*}{Nodes} & \multicolumn{4}{c}{Followers}                            & \multicolumn{2}{c}{Leaders} \\ \cline{2-7} 
		& $F_1$        & $F_1$        & $F_1$      & $F_1$         & $L_1$        & $L_2$        \\ 
		Pos(m)              & $[10;-20]$ & $[18;12]$ & $[4;12]$ & $[-2;16]$ & $[0;0]$   & $[1;1]$   \\ \hline
	\end{tabular}	
\end{table}

\begin{table}[htbp]
	\centering
	\caption{Parameters for the designed controller}
	\label{Tab. ControllerParameters}
	\begin{tabular}{llllll}
		\hline
		\multicolumn{1}{c}{$k_e$} & $\alpha_1$    & $k_z$ & $\alpha_2$    & $k_{\delta}$ & $\alpha_3$    \\ 
		6                         & $\frac{7}{3}$ & 2     & $3$ & 2            & $3$ \\ \hline
	\end{tabular}
	\vspace{-5pt}
\end{table}

The simulation results are demonstrated from  Figs.~\ref{fig. performance} - \ref{fig. trajectory}.  
Specifically, Fig.~\ref{fig. performance} shows the performance of the controller in \eqref{eq. finite-time-controller} and estimator in \eqref{eq. smoothingEstimator}, where Fig.~\ref{fig. z} and Fig.~\ref{fig. delta} are the evolution of the estimated errors of both the relative distances $z_i,\forall i \in \mathcal{V}_F$ and included angles $\delta_i,\forall i \in \mathcal{V}_F$. As proven by Theorem~\ref{Th. estimatedControllerTheoremFiniteTime}, they are exactly stabilized in finite-time. The real errors $e_{z,i}$ and $e_{\delta,i}$ demonstrated in Fig~\ref{fig. e_z} and Fig.~\ref{fig. e_delta}, however, converge to their theoretical accuracy in finite time as shown in Theorem~\ref{Th. errorRealEstimator}. 
Fig.~\ref{fig. estimatorError} shows the error between the real LAP and the output of continuous estimators that are maintained by all followers. We can see that the errors of all four estimators quickly converge into the stable region $[0,\epsilon]$, which validates the correctness of Theorem~\ref{Th. smoothingEstimator}. Although these errors are not exactly zeros in finite time, they become quite smaller than the theoretical accuracy $\epsilon$.
Fig.~\ref{fig. controlCommands} presents the control commands of all followers over the simulation, which validate that the designed controller and estimator don't cause dithering during the enclosing process. Lastly, Fig.~\ref{fig. trajectory} gives the trajectories of all followers up to 200s. From Figs.~\ref{fig. performance} - \ref{fig. trajectory}, we can see that the desired spacing pattern is achieved in finite time and can be maintained within the theoretical accuracy. 

\section{Conclusion}
The finite-time enclosing control problem using a multi-agent system for multiple moving targets is investigated in this paper. The proposed distributed estimator can track the geometrical center of multiple moving targets in finite time using only local information and the tracking accuracy is shown to be bounded and adjustable. Based on the output of this estimator, a pair of decentralized enclosing control laws are developed to achieve the desired spacing pattern in finite time. The steady errors of both the relative distance and included angle is bounded and can be adjusted by tuning the parameters in both estimator and controller. Future works may focus on reducing the tracking and controlling errors, extending the results to more general vehicles and so on.


%

\appendices



\ifCLASSOPTIONcaptionsoff
  \newpage
\fi



%
%
%
\bibliographystyle{IEEEtran}
\bibliography{IEEEabrv,mybibfile}

\end{document}